\newtheorem {theorem}    {Theorem}[section]
\newtheorem {lemma}      [theorem]    {Lemma}
\newtheorem {proposition}[theorem]    {Proposition}
\theoremstyle{definition}
\newtheorem{definition}[theorem]{Definition}
\newtheorem{remark}[theorem]{Remark}
\newtheorem{ex}[theorem]{Example}
\def\Z{\mathbb{Z}}
\def\p{\mathfrak p}
\numberwithin{equation}{section}
\newenvironment{red}{\relax\color{red}}{\relax}
\newenvironment{blue}{\relax\color{blue}}{\hspace*{.5ex}\relax}
\newenvironment{jaune}{\relax\color{green}}{\hspace*{.5ex}\relax}
\newcommand{\ber}{\begin{red}}
\newcommand{\er}{\end{red}}
\newcommand{\beb}{\begin{blue}}
\newcommand{\eb}{\end{blue}}
\newcommand{\bjn}{\begin{jaune}}
\newcommand{\ejn}{\end{jaune}}
\begin{document}

\title[Character expansion of correction factors]{Character expansion of Kac--Moody correction factors}

\date{\today}

\author[K.-H. Lee]{Kyu-Hwan Lee$^{\star}$}
\thanks{$^{\star}$This work was partially supported by a grant from the Simons Foundation (\#318706).}
\address{Department of
Mathematics, University of Connecticut, Storrs, CT 06269, U.S.A.}
\email{khlee@math.uconn.edu}

\author[D. Liu]{Dongwen Liu}
\address{School of Mathematical Sciences, Zhejiang University, Hangzhou 310027, P.R. China}\email{maliu@zju.edu.cn}

\author[T. Oliver]{Thomas Oliver$^{\dagger}$}
\thanks{$^{\dagger}$This article arises from research funded by the John Fell Oxford University Press Research Fund.}
\address{Mathematical Institute, University of Oxford, Andrew Wiles Building, Oxford, OX2 6GG, U.K.}
\email{Thomas.Oliver@maths.ox.ac.uk}

\subjclass[2010]{Primary 17B22, 17B67; Secondary 05E10.}

\begin{abstract} 
A correction factor naturally arises in the theory of $p$-adic Kac--Moody groups. In this paper, we expand  the correction factor into a sum of irreducible characters of the underlying Kac--Moody algebra. We derive a formula for the coefficients which lie in the ring of power series with integral coefficients. In the case that the Weyl group is a universal Coxeter group, we show that the coefficients are actually polynomials.
\end{abstract}

\maketitle

\section{Introduction}

Let $W$ be a Coxeter group, and consider its Poincar\'e series
\[ \chi(q):= \sum_{w \in W} q^{\ell(w)} ,\]
where $q$ is an indeterminate and $\ell(w)$ is the length of $w$. R. Steinberg showed in \cite{St} that the series $\chi(q)$ represents a rational function in $q$. When $W$ is the Weyl group of an irreducible, reduced, finite root system $\Phi$, I.G. Macdonald \cite{M72} found the following identity:
\begin{equation} \label{chi-s} \sum_{w \in W} \prod_{\alpha \in \Phi^+}  \left(\frac { 1 - q e^{- w \alpha}}{1 - e^{-w\alpha}}\right) =\chi(q) , \end{equation}
where $\Phi^+$ is the set of positive roots and $e^{\beta}$ is a formal exponential associated to $\beta$ in the root lattice $Q$.  Macdonald's identity reflects the geometry of the flag manifold.

A generalization of the left-hand side of \eqref{chi-s} to a Kac--Moody root system $\Phi$ would be
\[ \mathcal M (q) := \sum_{w \in W} \prod_{\alpha \in \Phi^+}  \left(\frac { 1 - q e^{- w \alpha}}{1 - e^{-w\alpha}}\right)^{m(\alpha)}, \]
where $m(\alpha)$ is the multiplicity of $\alpha$. The identity \eqref{chi-s} is no longer true for $\mathcal{M}(q)$, and so it is interesting to compute the {\em  correction factor} \footnote{A slight modification of this quotient, denoted by $\mathfrak m$, is what Macdonald called the {\em constant term} in the affine case and is also called the ``correction factor" in the literature (see \eqref{eq.MacMm} for a precise definition).} $\mathcal M(q)/ \chi(q)$. Macdonald \cite{M03} computed this quotient for the affine Kac--Moody case. The computation turns out to be equivalent to the Macdonald constant term conjecture \cite{M82}, which was proven by I. Cherednik in \cite{Ch95}. 

 The correction factor appears in the study of $p$-adic affine Kac--Moody groups, namely in the formal computation of Fourier coefficients of Eisenstein series and in the study of corresponding Hecke algebras. For example, it was shown by Braverman--Finkelberg--Kazhdan that this correction factor appears in the Gindikin--Karplevich formula for affine  Kac--Moody groups \cite{BFK} (see also \cite{BGKP,  BKP, GR, BPGR}). The correction factor in the general case was studied by Muthiah--Puskas--Whitehead \cite{MPW}. They encoded the data of the correction factor into a  collection of polynomials indexed by positive imaginary roots and derived formulas for these polynomials.

In this paper, we study the correction factor $\mathcal M(q)/ \chi(q)$ for arbitrary Kac--Moody root systems, which we write as a sum of characters  $\text{\rm ch}\left(L(\lambda)\right)$ of integrable irreducible representations $L(\lambda)$  of the Kac--Moody algebra $\mathfrak g$ with root system $\Phi$. As the first main result of this paper, we prove that the sum is supported on $\lambda \in P^+ \cap Q^-_{\rm im} $, where $P^+$ is the set of dominant integral weights and $Q_{\rm im}^-$ is the cone generated by negative imaginary roots. More precisely, we obtain

\begin{theorem}\label{lem.suppMPQ-1}
Given a Kac--Moody algebra $\mathfrak{g}$, let $P^+$ denote its set of dominant integral weights and  $Q^-_{\rm im}$  its negative  imaginary root cone. Then there are 
$d_\lambda \in \mathbb Z[[q]]$,  $\lambda\in P^+\cap Q^-_{\rm im}$, such that
\begin{equation}\label{eq.clambda-1}
\mathcal{M}(q) / \chi(q) = \sum_{\lambda \in P^+\cap Q^-_{\rm im}} d_\lambda \, \text{\rm ch}\left(L(\lambda)\right) . 
\end{equation}
\end{theorem}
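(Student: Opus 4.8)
The strategy is to recognise $\mathcal M(q)/\chi(q)$ as a $W$-invariant element of a suitable completion, expand it in the basis of characters, and locate its support; the Weyl--Kac denominator identity and character formula do most of the work. Write $\Delta := \prod_{\alpha\in\Phi^+}(1-e^{-\alpha})^{m(\alpha)}$ for the Weyl--Kac denominator, $P_q := \prod_{\alpha\in\Phi^+}(1-qe^{-\alpha})^{m(\alpha)} = \sum_{\beta\in Q^+}c_\beta(q)e^{-\beta}$ for its $q$-analogue (so $c_\beta(q)\in\mathbb Z[q]$ and $c_0 = 1$), and $\rho$ for a Weyl vector. Both $\Delta$ and $P_q$ lie in $\widehat{\mathbb Z[[q]][Q]}$, the completion of $\mathbb Z[[q]][Q]$ along $Q^+$, in which $\Delta$ is a unit. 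Using the anti-invariance $w(e^\rho\Delta) = (-1)^{\ell(w)}e^\rho\Delta$, the $w$-th summand of $\mathcal M(q)$ equals $(-1)^{\ell(w)}e^{w\rho-\rho}w(P_q)/\Delta$, so
\[\Delta\cdot\mathcal M(q) \;=\; \sum_{w\in W}(-1)^{\ell(w)}e^{w\rho-\rho}\,w(P_q) \;=\; \sum_{\beta\in Q^+}c_\beta(q)\sum_{w\in W}(-1)^{\ell(w)}e^{w(\rho-\beta)-\rho}.\]

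First I would pin down the support, working from the original sum. Expanding the $w$-th summand factor by factor, a factor at $\alpha$ with $w\alpha\in\Phi^+$ equals $1+(1-q)\sum_{k\ge1}e^{-kw\alpha}$, and a factor with $w\alpha\in\Phi^-$ (exactly $\ell(w)$ of these, all real) equals $q+(q-1)\sum_{k\ge1}e^{kw\alpha}$; every factor, hence every summand, is supported on $Q^-$, and the $w$-th summand has constant term $q^{\ell(w)}$. A height count shows that for fixed $\mu\in Q^-$ only finitely many $w$ contribute to any given power of $q$ in the coefficient of $e^\mu$, so $\mathcal M(q)$ converges in $\widehat{\mathbb Z[[q]][Q]}$, is supported on $Q^-$, and has constant term $\sum_{w\in W}q^{\ell(w)}=\chi(q)$. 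Since $\chi(q)$ is a unit of $\mathbb Z[[q]]$, the same holds for $\mathcal M(q)/\chi(q)$, with constant term $1$.

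Next, for each $\beta$ I would evaluate the inner alternating sum in the displayed formula by the usual straightening of Weyl alternants and the Weyl--Kac character formula: it vanishes unless $\rho-\beta$ is $W$-regular and conjugate to a regular dominant weight $\lambda_\beta+\rho$ with $\lambda_\beta\in P^+$, in which case it equals $(-1)^{\ell(w_\beta)}\Delta\,\text{\rm ch}(L(\lambda_\beta))$ for the unique $w_\beta$ with $w_\beta(\lambda_\beta+\rho)=\rho-\beta$. Cancelling $\Delta$ and collecting terms gives $\mathcal M(q)=\sum_{\lambda\in P^+}e_\lambda(q)\,\text{\rm ch}(L(\lambda))$ with $e_\lambda(q)=\sum_{\beta\,:\,\lambda_\beta=\lambda}(-1)^{\ell(w_\beta)}c_\beta(q)\in\mathbb Z[[q]]$, whence $\mathcal M(q)/\chi(q)=\sum_{\lambda\in P^+}d_\lambda\,\text{\rm ch}(L(\lambda))$ with $d_\lambda=e_\lambda(q)/\chi(q)\in\mathbb Z[[q]]$. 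Because the left-hand side is supported on $Q^-$ while the characters $\text{\rm ch}(L(\lambda))$ are unitriangular with respect to the $W$-orbit sums, a highest-weight peeling argument forces $d_\lambda=0$ unless $\lambda\in Q^-$. Finally, an element of $P^+$ lying in $Q^-$ becomes, after negation, an anti-dominant element of $Q^+$, which --- decomposing over the connected components of its support and using the standard description of the imaginary cone of a Kac--Moody root system --- is a nonnegative integer combination of positive imaginary roots; hence $P^+\cap Q^-=P^+\cap Q^-_{\rm im}$ and the sum is supported where claimed.

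The main obstacle is the bookkeeping in the ambient completion. The defining sum for $\mathcal M(q)$ ranges over the infinite group $W$, and its summands, although each supported on $Q^-$ once the factors $\tfrac{1-qe^{-w\alpha}}{1-e^{-w\alpha}}$ are expanded in the correct direction, are not honest $w$-translates of a single element of $\widehat{\mathbb Z[[q]][Q]}$, on which $W$ does not act. Consequently both the $W$-invariance of $\mathcal M(q)$ and the rearrangement of $\Delta\cdot\mathcal M(q)$ into $\sum_\beta c_\beta(q)\sum_w(-1)^{\ell(w)}e^{w(\rho-\beta)-\rho}$, together with the straightening of the inner alternants for weights $\rho-\beta$ that may fall outside the Tits cone, have to be carried out inside a $W$-stable completion $\widehat{\mathcal E}$ of formal sums whose support lies in a finite union of cones $\nu-Q^+$, after which one must check that the answer lands back in $\widehat{\mathbb Z[[q]][Q]}$ and in particular is supported on $Q^-$. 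Once this framework is in place, the remaining ingredients --- the straightening lemma, the triangularity of characters over orbit sums, the convergence of the sub-sums defining $e_\lambda(q)$, and the description of anti-dominant elements of $Q^+$ --- are routine.
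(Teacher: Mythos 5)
Your overall route is essentially the paper's: Lemma~\ref{ex.Delta} and Theorem~\ref{lem.suppMPQ} combined, with the explicit formula of Theorem~\ref{thm.formulaford} built in. The expansion of each factor in the two directions, the constant term $q^{\ell(w)}$ of the $w$-th summand, the height count giving convergence in $\mathbb Z[[q]]$, the invertibility of $\chi(q)$, the need for a $W$-stable class of ``good products'' on which the action is defined (the paper's $\mathcal G(R)$), and the reduction of anti-dominant elements of $Q^+$ to $\mathbb Z_{\geq0}$-combinations of positive imaginary roots via connected components of the support all match the paper and are correct. The convergence of the sub-sums $e_\lambda(q)$ over a $\circ$-orbit, which you defer as routine, is indeed routine once one notes that $(-1)^{\ell(w)}H(-w\circ\lambda)$ is divisible by $q^{\ell(w)-\mathrm{ht}(-\lambda)}$ (cf.~\eqref{eq.dlambdaformula}).

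The genuine gap is at the straightening step. You assert that for each $\beta$ the inner sum $\sum_w(-1)^{\ell(w)}e^{w(\rho-\beta)-\rho}$ ``vanishes unless $\rho-\beta$ is $W$-regular and conjugate to a regular dominant weight.'' For a general $\beta\in Q^+$ this is false: if $\rho-\beta$ lies outside the Tits cone and its negative (which happens for infinite $W$), its orbit has no dominant representative, its stabilizer need not be generated by reflections, and when the stabilizer is trivial the alternating orbit sum is a nonzero formal series in which every exponential occurs exactly once --- it neither vanishes nor equals $\pm$ a character numerator. Your closing paragraph acknowledges that some $\rho-\beta$ ``may fall outside the Tits cone'' but proposes to carry out the straightening there anyway inside a larger completion; that cannot be done, and checking afterwards that the total lands back in the right ring does not identify the individual orbit contributions. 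Two repairs are available. (a) Show that the offending $\beta$ never occur: $c_\beta(q)=H(\beta;q)\neq 0$ forces $\beta$ to be a sum of distinct positive roots, equivalently $\rho-\beta$ is a weight of the integrable module $V(\rho)$ (cf.~\eqref{eq.KL321}), hence lies in the convex hull of $W\rho$ and in particular in the Tits cone, where your straightening is legitimate. (b) Argue on the output side as the paper does: the product $\Xi$ is supported on $Q^-$ and its coefficients satisfy $c_\gamma=(-1)^{\ell(w)}c_{w\circ\gamma}$, which forces support on $Q'=\bigcap_{w\in W}w\circ Q^-$, and Lemma~\ref{lem-refl} shows every $\circ$-orbit in $Q'$ contains a unique $\mu$ with $\mu+\rho\in P^+$ and has reflection-generated stabilizer, so the grouping into $\xi^\lambda$'s goes through without any case analysis of individual $\rho-\beta$. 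Either way, a substantive lemma is missing from your write-up, and it is exactly the point where the Kac--Moody case differs from the finite one.
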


Actually, we prove this result for any $W$-invariant functions with support in the negative root cone $Q^-$ (see Theorem \ref{lem.suppMPQ}). We recover \eqref{chi-s} as an immediate consequence, since $P^+ \cap Q^-_{\rm im}= \{ 0 \}$
for finite root systems. This result also explains why the known formulas in the affine case only involve imaginary roots.

The coefficients $d_\lambda$ are related to the function $H(\mu; q)$, $\mu \in Q$, which was introduced by Kim and Lee \cite{KL11, Kim--Lee} in a study of $p$-adic integrals using canonical/crystal bases from the context of Weyl group multiple Dirichlet series (\cite{Bu} for a survey). See Definition \ref{def-kl} for the definition of $H(\mu;q)$. We prove the following formula (Theorem \ref{thm.formulaford}): 
\begin{equation}\label{eq.formulad-1}
\chi(q) \, d_{\lambda} = \sum_{w \in W} (-1)^{\ell(w)}H(-w \circ \lambda; q), 
\end{equation}
where $w\circ \lambda:= w(\lambda +\rho) -\rho$ with a Weyl vector $\rho$.

Using  \eqref{eq.formulad-1},  one can compute $d_\lambda$ explicitly. In particular, in the rank $2$ hyperbolic case, we observe that they are actually polynomials in $q$. Generalizing this observation, we prove that $d_\lambda$ are always polynomials when $W$ is a universal Coxeter group, or equivalently, when $a_{ij} a_{ji} \ge 4$ for all $i,j \in I$ with the generalized Cartan matrix $A=(a_{ij})_{i,j \in I}$  of $\mathfrak g$. Formally, we obtain

\begin{theorem}\label{thm.polynomial}
Assume that the Weyl group $W$ of $\mathfrak g$ is  a universal Coxeter group. Then we have $d_{\lambda}\in\mathbb{Z}[q]$ for all $\lambda\in P^+\cap Q_{\rm im}^-$.
\end{theorem}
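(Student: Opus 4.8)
The plan is to combine the formula \eqref{eq.formulad-1},
\[
\chi(q)\,d_\lambda \;=\; \sum_{w\in W}(-1)^{\ell(w)}H(-w\circ\lambda;q)\;=:\;S_\lambda(q),
\]
with the explicit form of the Poincar\'e series of a universal Coxeter group on $n:=|I|$ generators: the number of elements of length $k\ge1$ equals $n(n-1)^{k-1}$, so
\[
\chi(q)\;=\;1+\frac{nq}{1-(n-1)q}\;=\;\frac{1+q}{1-(n-1)q}.
\]
Since Theorem \ref{lem.suppMPQ-1} already gives $d_\lambda\in\mathbb Z[[q]]$, proving Theorem \ref{thm.polynomial} amounts to showing that $S_\lambda(q)$, which a priori is only a power series, is in fact $\chi(q)$ times a polynomial; equivalently, that $S_\lambda$ extends to a rational function of $q$ whose only finite pole is a simple pole at $q=\tfrac1{n-1}$ and whose numerator is divisible by $1+q$. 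When $n=1$ one has $\mathfrak g=\mathfrak{sl}_2$, $P^+\cap Q^-_{\rm im}=\{0\}$, and there is nothing to prove, so I would assume $n\ge2$.

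The only inputs about $H$ that I would need are elementary: for each fixed $\mu$ the quantity $H(\mu;q)$ is a polynomial in $q$, vanishing unless $\mu\in Q^+$; and, because the sum in \eqref{eq.formulad-1} converges in $\mathbb Z[[q]]$, one has $\operatorname{val}_q H(-w\circ\lambda;q)\to\infty$ as $\ell(w)\to\infty$, which legitimises the rearrangements below (note $\operatorname{ht}(-w\circ\lambda)\ge\operatorname{ht}(-\lambda)+\ell(w)$). The substance is the combinatorics of the universal Coxeter group: every $w\ne e$ has a \emph{unique} reduced word $s_{i_1}\cdots s_{i_\ell}$, characterised by $i_j\ne i_{j+1}$, so the Cayley graph of $W$ is a tree in which the identity has $n$ neighbours and every other vertex has $n-1$ children. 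I would sum $S_\lambda$ over this tree, using the identity $-(s_iu\circ\lambda)=\alpha_i+s_i(-u\circ\lambda)$ (valid when $\ell(s_iu)=\ell(u)+1$, since $\langle\rho,\alpha_i^\vee\rangle=1$) together with the transformation law, under a simple reflection, of the generating series $F(q):=\prod_{\beta\in\Phi^+}(1-qe^{-\beta})^{m(\beta)}$ whose coefficients record the $H(\mu;q)$, namely $s_i(F)=\dfrac{1-qe^{\alpha_i}}{1-qe^{-\alpha_i}}\,F$. Propagating this down the tree should express the subtree sums $S^{(w)}_\lambda(q):=\sum_v(-1)^{\ell(v)-\ell(w)}H(-v\circ\lambda;q)$ (the sum over the subtree rooted at $w$) in terms of finitely many auxiliary generating series — the sums, over the $w$ at a fixed depth with a fixed first letter, of $H$ at $-w\circ\lambda$ and at its translates by simple roots, packaged as $\sum_{m\ge0}q^mH(-w\circ\lambda-m\alpha_i;q)$ — and the relations $s_i^2=e$ should then collapse everything to a finite linear system over $\mathbb Q(q)$.

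Solving that finite system is the goal. The $n$-fold branching of the tree together with the order-two relations should force its determinant to be a unit times $1-(n-1)q$, matching the denominator of $\chi(q)$ exactly, while the factor $1+q$ in the numerator of $S_\lambda$ should emerge from the single surviving $\langle s_i\rangle\cong\mathbb Z/2$ factor of the free product. This gives $S_\lambda(q)=\chi(q)\,P_\lambda(q)$ with $P_\lambda\in\mathbb Q[q]$, whence $d_\lambda=P_\lambda$; since $d_\lambda\in\mathbb Z[[q]]$ this forces $P_\lambda\in\mathbb Z[q]$, and the argument should be effective, yielding a bound on $\deg_q d_\lambda$ in terms of $\operatorname{ht}(-\lambda)$. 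The rank-two case, where $W$ is the infinite dihedral group, the tree is a pair of rays, and the subtree sums are visibly geometric series summing to rational functions, is the model one generalises.

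The step I expect to be the main obstacle is showing that the tree recursion actually \emph{closes} into a finite linear system — i.e.\ isolating the correct finite family of auxiliary series. This is exactly where the hypothesis on $W$ is essential: for a non-universal Coxeter group the braid relations make the subtree sums depend on unboundedly much data and no finite recursion exists. Even for universal $W$ one must check that when $\sum_{m\ge0}q^mH(\,\cdot\,-m\alpha_i;q)$ is pushed through a reflection $s_j$ with $j\ne i$, the shifts that appear are by $s_j\alpha_i=\alpha_i+|a_{ij}|\alpha_j$ rather than by a simple root, and must verify that these can be reabsorbed into the same finite family; and one must confirm that the intricate imaginary-root multiplicities entering $F$ do not enlarge the system. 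A final point is pinning down the determinant of the system as precisely $1-(n-1)q$ up to a unit, and the numerator as genuinely divisible by $1+q$ rather than merely dividing some larger polynomial.
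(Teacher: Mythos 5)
Your high-level strategy coincides with the paper's: start from \eqref{eq.formulad-1}, use $\chi(q)=\frac{1+q}{1-(n-1)q}$ for a universal Coxeter group on $n$ generators, and show that $S_\lambda(q)=\sum_{w}(-1)^{\ell(w)}H(-w\circ\lambda)$ continues to a rational function whose only finite pole is a simple one at $q=\frac{1}{n-1}$ and whose numerator is divisible by $1+q$. But the proposal has a genuine gap at exactly the point you flag yourself: you do not establish that the tree recursion closes into a finite linear system, nor that its determinant is a unit times $1-(n-1)q$, nor that the resulting numerator is divisible by $1+q$. Those three assertions \emph{are} the theorem; ``should force'', ``should collapse'' and ``should emerge'' carry the entire burden of proof. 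Your own worry about pushing the auxiliary series $\sum_{m\ge0}q^mH(\mu-m\alpha_i;q)$ through a reflection $s_j$ with $j\ne i$ --- the shifts become $s_j\alpha_i=\alpha_i+|a_{ij}|\alpha_j$ and it is unclear they can be reabsorbed into a fixed finite family --- is well founded and is not resolved in the proposal; nor is the effect of the imaginary root multiplicities on the size of the putative system. The divisibility by $1+q$ is likewise only asserted, with no mechanism given.

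The missing idea is a per-term closed formula that makes the tail of the sum \emph{literally} geometric, bypassing any linear system. The paper proves (equation \eqref{eq.dlambdaformula}, via the explicit bijections $\phi_i:\mathcal P(-\lambda)\to\mathcal P(-s_i\circ\lambda)$ and Lemma \ref{lem-bij-mphi}) that
\[
(-1)^{\ell(w)}H(-w\circ\lambda)=q^{\ell(w)}\sum_{\mathfrak p\in\mathcal P(-\lambda)}(-q)^{m(\mathfrak p,w)},
\]
where $m(\mathfrak p,w)$ depends only on which parts $\beta$ of the \emph{fixed} admissible partition $\mathfrak p$ of $-\lambda$ satisfy $w\beta<0$. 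Only finitely many real roots occur as parts, so beyond the depth $N$ of \eqref{defN} the quantity $m(\mathfrak p,w)$ is constant along each branch of the Cayley tree; each branch sum is then a geometric series with ratio $(n-1)q$, which yields rationality with the single pole $q=\frac{1}{n-1}$. The factor $1+q$ is then obtained by an explicit evaluation of the continued rational function at $q=-1$ (Proposition \ref{prop-vanishing}), using $\sum_{\mathfrak p}1^{m(\mathfrak p,w)}=|\mathcal P(-\lambda)|$, independent of $w$, to show the finite part and the summed tails cancel. To complete your transfer-matrix variant you would need an analogous stabilization statement for your auxiliary series after finitely many reflections; at that point you would essentially have rediscovered this lemma.
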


It would be very interesting to see if $d_\lambda$ are polynomials for arbitrary Kac--Moody root systems. We expect that these coefficients carry important combinatorial, representation-theoretic information, which is yet to be revealed. We hope that we can investigate these issues in the near future.  

The main text proceeds as follows. In Section~\ref{section.Background} we review standard background material and construct a large ring containing $\mathcal{M}(q)$ equipped with a $W$-action. We conclude with the statement that $W$-invariant elements admit a character expansion, which applies in particular to $\mathcal{M}(q)$. In Section~\ref{section.formula} we compute the character coefficients in terms of the function $H$. Though the formula deduced involves an infinite sum, it exhibits a large amount of cancellation and in Section~\ref{section.poly} we show that it is in fact a polynomial when $W$ is a universal Coxeter group. In the Appendix, we give compute the coefficients for certain small imaginary roots of a rank 2 hyperbolic Kac--Moody algebra. 

\subsection*{Acknowledgments} We thank Dinakar Muthiah, Anna Pusk\'as and Ian Whitehead for helpful discussions, and are grateful to the anonymous referee for helpful comments.

\section{Existence of character coefficients}\label{section.Background}
We will use the conventions and terminology of \cite{Kac}. Let $I=\{1,\cdots,n\}$ and let $A$ be a generalized Cartan matrix with realisation $(\mathfrak{h},\Pi,\Pi^{\vee})$. In particular, the elements of the set $\Pi=\{\alpha_1,\dots,\alpha_n\} \subset \mathfrak h^*$ (resp. $\Pi^{\vee}=\{\alpha_1^{\vee},\dots,\alpha_n^{\vee}\} \subset \mathfrak h$)  are the simple roots (resp. simple coroots). The root lattice $Q$ (resp. positive root cone $Q^+$) is the $\mathbb{Z}$-span (resp. $\mathbb{Z}_{\geq0}$-span)  of  $\Pi$. We set $Q^-=-Q^+$. A partial order $\geq$ on $\mathfrak{h}^{\ast}$ is defined by $\mu\geq\nu$ if $\mu-\nu\in Q^+$. We say $\alpha\in\mathfrak{h}^{\ast}$ is positive (resp. negative) if $\alpha>0$ (resp. $\alpha<0$). 

Let $\mathfrak{g}$ be the Kac--Moody algebra associated to $A$, which admits the root space decomposition $\mathfrak{g}=\oplus_{\alpha\in Q}\mathfrak{g}_{\alpha}$, with $\mathfrak{g}_0=\mathfrak{h}$. Given $\alpha\in Q$, its multiplicity $m(\alpha)$ is the dimension of the vector space $\mathfrak{g}_{\alpha}$. A non-zero $\alpha\in Q$ is a root if $m(\alpha)\neq0$. We will denote the set of roots by $\Phi$, and the set of positive (resp. negative) roots by $\Phi^+$ (resp. $\Phi^-$).  

Let $W$ denote the Weyl group of $\mathfrak g$, which is the subgroup of $\mathrm{Aut}\left(\mathfrak{h}^{\ast}\right)$ generated by the simple reflections $s_i$, $i\in I$. A root $\alpha\in\Phi$ is called real if there is $w\in W$ such that $w\alpha$ is a simple root. A root that is not real is called imaginary. If $\alpha$ is real, then $m(\alpha)=1$. The set of real (resp. imaginary) roots is denoted by $\Phi_{\mathrm{re}}$ (resp. $\Phi_{\mathrm{im}}$), and the set of positive real (resp. positive imaginary) roots is denoted by $\Phi^+_{\mathrm{re}}$ (resp. $\Phi^+_{\mathrm{im}}$). 

Let $q$ denote a formal variable, and let $\mathbb{Z}[[q]]$ be the ring of power series in $q$ with integer coefficients. Recall that $f(q)\in\mathbb{Z}[[q]]$ is invertible if and only if the constant term $f(0)$ of $f$ is equal to $\pm1$. The inverse of a unit in $\mathbb Z[[q]]$ will be written as a fraction whenever it is convenient. For example, we write
\[\frac 1 {1-q}= 1+ q+q^2 + \cdots . \]
\begin{ex}\label{ex.Poincare}
The Poincar\'{e} series of the Weyl group $W$ is defined as follows:
\begin{equation}\label{eq.poincare}
\chi(q)=\sum_{w\in W}q^{\ell(w)}\in\mathbb{Z}[[q]],
\end{equation}
where the length $\ell(w)$ of $w\in W$ is the minimal $\ell$ such that $w=s_{i_1}\cdots s_{i_{\ell}}$ is a product of simple reflections. As the only word of length 0 is the identity element, the constant term of $\chi(q)$ is $1$. Thus, $\chi(q) \in\mathbb{Z}[[q]]^{\times}$.
\end{ex}
\textbf{Notation.} To each $\lambda\in\mathfrak{h}^{\ast}$, we associate a formal exponential denoted by $e^{\lambda}$, and define $e^{\lambda}e^{\mu}=e^{\lambda+\mu}$ for $\lambda,\mu\in\mathfrak{h}^{\ast}$.  Let $\Z((q))$ denote the ring of Laurent series with integral coefficients, and let $R$ be a subring of $\mathbb{Z}((q))$. We denote by $\mathcal S(R)$ the additive group of formal sums  $\sum_{\lambda \in \mathfrak{h}^{\ast}}a_{\lambda}e^{\lambda}$ with $a_{\lambda}\in R$ for all $\lambda\in\mathfrak{h}^{\ast}$. 

\begin{definition}
The {\em support} of a formal sum $\sum_{\lambda\in\mathfrak{h}^{\ast}}a_{\lambda}e^{\lambda}\in\mathcal{S}(R)$ is the set of $\lambda\in\mathfrak{h}^{\ast}$ such that $a_{\lambda}\neq0$.
\end{definition}
If $f=\sum_{\lambda \in  Q}a_{\lambda}e^{\lambda}$ is a unit of $\mathcal{S}(R)$ and has support in a translate of  $Q^-$, then $f$ has a unique product expansion as in \cite[Proposition~2.2]{MPW}:
\begin{equation}\label{eq.product}
\sum_{ \lambda \in Q}a_{\lambda}e^{\lambda}={ u e^{\lambda_0}}\prod_{\lambda\in Q^-\backslash\{0\}}\prod_{n\in\Z}(1-q^ne^{\lambda})^{m(\lambda,n)},
\end{equation}
for some $u\in R^\times$, $\lambda_0 \in Q$ and $m(\lambda,n)\in\mathbb{Z}$ such that, for every $\lambda$, the set $\{n\in\Z:m(\lambda,n)\neq0\}$ is bounded below.

\begin{definition}[Section 2.3 in~\cite{MPW}]
A product of the form~\eqref{eq.product} is called a {\em good product with coefficients in $R$} if all $\lambda$ appearing in its factors are multiples of roots $\alpha\in\Phi$, and the set of factors corresponding to any real root $\alpha\in\Phi_{\mathrm{re}}$ is finite. We will denote by $\mathcal{G}(R)$ the multiplicative group of good products with coefficients in $R$.
\end{definition}
An element of $\mathcal G(R)$ expands to a formal sum in $\mathcal S(R)$ by definition. The notion of a good product is introduced, in part, to define the action of $W$ as below.

\begin{definition}\label{def.goodproductaction}
We define an action of $W$ on $\mathcal{G}(R)$ by extending the following action on the factors of \eqref{eq.product} multiplicatively: 
\begin{equation}\label{eq.goodproductaction}
w(1-q^ne^{\lambda})=\begin{cases}
1-q^ne^{w(\lambda)},&w(\lambda)<0,\\
(-q^ne^{w(\lambda)})(1-q^{-n}e^{-w(\lambda)}),&w(\lambda)>0,
\end{cases}
\end{equation}
for $w\in W$. Given $f\in\mathcal{G}(R)$, we will sometimes write $f^w=w(f)$. We will denote by $\mathcal{G}^W(R)$ the ring of $W$-invariant elements of $\mathcal{G}(R)$. 
\end{definition}

Define the negative imaginary cone $Q_{\rm im}^-$ to be the cone generated by negative imaginary roots. Then we have $Q_{\rm im}^-=\bigcap_{w\in W}w(Q^-)$.  Thus if $f\in \mathcal{G}^W(R)$ is supported on $Q^-$, then it is in fact supported on $Q_{\rm im}^-$. It was noted in \cite{MPW} that, for $w \in W$ and $f=\sum_{\lambda \in \mathfrak{h}^{\ast}}a_{\lambda}e^{\lambda}\in\mathcal{G}(R)$, we have
\begin{equation}\label{eq.seriesaction}
w(f)= \sum_{\lambda \in \mathfrak{h}^{\ast}}a_{\lambda}e^{w\lambda}. 
\end{equation}

\begin{remark}
The set of $f\in\mathcal{S}(R)$ supported on $Q^-$ is not closed under the action of $W$ defined by~\eqref{eq.seriesaction}, but $\mathcal{G}(R)$ is. 
\end{remark}
The basic good product in this paper is
\begin{equation}\label{eq.DeltaDot}
\Delta:=\prod_{\alpha \in \Phi^+}  \left(\frac { 1 - q e^{- \alpha}}{1 - e^{-\alpha}}\right)^{m(\alpha)}.
\end{equation}
Here $\frac { 1 - q e^{- \alpha}}{1 - e^{-\alpha}}= 1+ \sum_{n\ge 1} (1-q) e^{-n \alpha}$, and it is clear that $\Delta \in \mathcal {G} \left(\Z[q]\right)$.

Since $m(\alpha) =1$ for $\alpha \in \Phi^+_{\mathrm{re}}$, we set
\begin{equation}
\Delta_{\mathrm{re}}:=\prod_{\alpha\in\Phi^+_{\mathrm{re}}}\left(\frac { 1 - q e^{- \alpha}}{1 - e^{-\alpha}}\right),\qquad \Delta_{\mathrm{im}}:=\prod_{\alpha\in\Phi^+_{\mathrm{im}}}\left(\frac { 1 - q e^{- \alpha}}{1 - e^{-\alpha}}\right)^{m(\alpha)}
\end{equation} so that we have \[ \Delta=\Delta_{\mathrm{re}}\Delta_{\mathrm{im}}.\] 

Finally, define
\begin{equation}\label{eq.correction}
\boxed{\mathcal{M}(q):=\sum_{w\in W}\Delta^w.}
\end{equation}
Clearly, $\mathcal M(q)$ is $W$-invariant since it is the sum of $W$-action on $\Delta$.

\begin{lemma}\label{ex.Delta}
The formal sum $\mathcal M (q)$ is a $W$-invariant good product with coefficients in $\Z[[q]]$, i.e.
$\mathcal{M}(q)\in\mathcal{G}^W\left(\Z[[q]]\right)$. Moreover, $\mathcal{M}(q)$ is supported on $Q_{\rm im}^-$  and has the constant term equal to $\chi(q)$. 
\end{lemma}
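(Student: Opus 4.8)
The plan is to reduce the statement to an explicit product formula for each summand $\Delta^w$, and then to control the infinite sum by a $q$-adic valuation estimate.

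\emph{Step 1: a product formula for $\Delta^w$.} I would first compute $\Delta^{s_i}$ from Definition~\ref{def.goodproductaction}: since $s_i$ permutes $\Phi^+\setminus\{\alpha_i\}$ and preserves multiplicities, it fixes $\prod_{\alpha\in\Phi^+\setminus\{\alpha_i\}}\big(\tfrac{1-qe^{-\alpha}}{1-e^{-\alpha}}\big)^{m(\alpha)}$, while on the $\alpha_i$-factor the rule~\eqref{eq.goodproductaction} (branch $w\lambda>0$) gives $\tfrac{(-qe^{\alpha_i})(1-q^{-1}e^{-\alpha_i})}{(-e^{\alpha_i})(1-e^{-\alpha_i})}=\tfrac{q-e^{-\alpha_i}}{1-e^{-\alpha_i}}$, so $\Delta^{s_i}=\tfrac{q-e^{-\alpha_i}}{1-qe^{-\alpha_i}}\,\Delta$. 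Iterating this via $\Delta^{ww'}=w(\Delta^{w'})$ along a reduced word $w=s_{i_1}\cdots s_{i_\ell}$, using that each $s_{i_1}\cdots s_{i_{j-1}}(\alpha_{i_j})$ is a positive real root, gives
\[
\Delta^{w}=\Bigg(\prod_{\beta\in N(w)}\frac{q-e^{-\beta}}{1-qe^{-\beta}}\Bigg)\Delta,
\qquad N(w):=\{\beta\in\Phi^+_{\mathrm{re}}:w^{-1}\beta<0\},
\]
the product running over the $\ell(w)$ positive real roots of the inversion set of $w$. As $\tfrac{q-e^{-\beta}}{1-qe^{-\beta}}=q+\sum_{m\ge1}(q^{m+1}-q^{m-1})e^{-m\beta}$, this exhibits $\Delta^w$ as a good product supported on $Q^-$, with all coefficients in $\Z[q]$ and $e^0$-coefficient exactly $q^{\ell(w)}$.

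\emph{Step 2: convergence, support, constant term.} Expanding the formula and tracking powers of $q$: a monomial $e^{-m_1\beta_1}\cdots e^{-m_\ell\beta_\ell}e^{\nu}$ (with $N(w)=\{\beta_1,\dots,\beta_\ell\}$, $\nu\le0$ coming from $\Delta$) contributing to the coefficient of $e^\mu$ in $\Delta^w$ carries a factor $q$ for each $j$ with $m_j=0$; and $\#\{j:m_j\ge1\}\le\operatorname{ht}(-\mu)$ since the $\beta_j$ are distinct positive roots with $\sum_{m_j\ge1}\beta_j\le-\mu$. Hence the $e^\mu$-coefficient of $\Delta^w$ has $q$-adic valuation at least $\ell(w)-\operatorname{ht}(-\mu)$, so for fixed $\mu$ and $k$ only the finitely many $w$ with $\ell(w)\le k+\operatorname{ht}(-\mu)$ contribute to the $q^ke^\mu$-coefficient of $\sum_w\Delta^w$. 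Therefore $\mathcal M(q)=\sum_w\Delta^w$ is a well-defined element of $\mathcal S(\Z[[q]])$ supported on $Q^-$; taking $\mu=0$, where the $e^0$-coefficient of $\Delta^w$ equals $q^{\ell(w)}$, gives that the $e^0$-coefficient of $\mathcal M(q)$ is $\sum_{w\in W}q^{\ell(w)}=\chi(q)$, a unit of $\Z[[q]]$.

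\emph{Step 3: $\mathcal M(q)\in\mathcal G^W(\Z[[q]])$ and the imaginary cone.} As a unit of $\mathcal S(\Z[[q]])$ supported on $Q^-$, $\mathcal M(q)$ has the unique product expansion~\eqref{eq.product} with $\lambda_0=0$, $u=\chi(q)$. The remaining — and, I expect, hardest — point is to show this expansion is \emph{good}: that no real root indexes one of its factors, and that the surviving factors are indexed by multiples of imaginary roots. A useful reduction is that $\Delta_{\mathrm{im}}$ is itself $W$-invariant (because $W$ preserves $\Phi^+_{\mathrm{im}}$ with multiplicities), so $\Delta^w=\Delta^w_{\mathrm{re}}\,\Delta_{\mathrm{im}}$ and $\mathcal M(q)=\Delta_{\mathrm{im}}\,\sum_w\Delta^w_{\mathrm{re}}$, whence (as $\mathcal G$ is a group and $\Delta_{\mathrm{im}}\in\mathcal G$) goodness of $\mathcal M(q)$ reduces to that of $\sum_w\Delta^w_{\mathrm{re}}$. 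To kill the real-root factors I would use the $W$-invariance of $\mathcal M(q)$ together with the fact that every positive real root is sent to a negative root — hence ``flipped'' in~\eqref{eq.goodproductaction} — by some $w\in W$, so a surviving real-root factor would push the support of $\mathcal M(q)$ outside $Q^-$, a contradiction; the remaining root-multiple property of the imaginary factors I would read off from low-height coefficients or import from \cite{MPW}. Granting goodness, $\mathcal M(q)\in\mathcal G^W(\Z[[q]])$, its support lies in $\bigcap_{w\in W}w(Q^-)=Q^-_{\mathrm{im}}$ by the identity recorded above, and its constant term is $\chi(q)$ by Step~2, as claimed. The main obstacle is exactly this passage from $\mathcal S$ to $\mathcal G$; Steps 1--2 are routine once the product formula for $\Delta^w$ is in hand.
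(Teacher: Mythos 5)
Your proposal is correct and follows essentially the same route as the paper: your product formula for $\Delta^w$ over the inversion set is exactly the paper's identity \eqref{eq-easy-1} with the factor $q^{\ell(w)}$ pulled out, your valuation bound $\ell(w)-\operatorname{ht}(-\mu)$ is the paper's estimate on $p_{\beta,w}$, and the support/goodness argument via $\bigcap_{w}w(Q^-)=Q^-_{\rm im}$ together with \cite[Proposition~2.2]{MPW} is the same. The contradiction argument you sketch in Step 3 is an unnecessary detour: once the support lies in $Q^-_{\rm im}$, no (multiple of a) real root can index a factor of the unique product expansion, since no real root lies in $Q^-_{\rm im}$, which is precisely how the paper concludes.
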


\begin{proof}
Since the set $\Phi^+_{\mathrm{im}}$ is $W$-invariant, and $m(w \alpha) =m(\alpha)$ for $w \in W$ and $\alpha \in \Phi$, we have $\Delta_{\mathrm{im}}^w=\Delta_{\mathrm{im}}$. It follows that $\Delta^w=\left(\Delta_{\text{re}}\Delta_{\text{im}}\right)^w=\Delta^w_{\text{re}}\Delta_{\text{im}}$. By Definition \ref{def.goodproductaction}, we have 
\[
w\left(\frac{1-qe^{-\alpha}}{1-e^{-\alpha}}\right)
=\begin{cases}\displaystyle{
\frac {1-qe^{-w(\alpha)}} {1-e^{-w(\alpha)}} }& \text{if } w(\alpha)>0,\\  \\
\displaystyle {\frac{ qe^{-w(\alpha)}(1-q^{-1}e^{w(\alpha)})} {e^{-w(\alpha)}(1-e^{w(\alpha)} ) }=\frac { q(1-q^{-1}e^{w(\alpha)})}{1-e^{w(\alpha)}} } & \text{if } w(\alpha)<0, 
\end{cases}
\] for $w \in W$ and $\alpha \in \Phi^+_{\mathrm{re}}$. One can immediately see that 
 the sum $\mathcal{M}(q)$ is supported on $Q^-$.
Since \begin{equation} \label{eq-easy} 
\dfrac { 1 - q e^{- \alpha}}{1 - e^{-\alpha}}= 1+ \sum_{n\ge 1} (1-q) e^{-n \alpha} \ \text{ and } \  \dfrac { q(1 - q^{-1} e^{- \alpha})}{1 - e^{-\alpha}}=\dfrac { q -  e^{- \alpha}}{1 - e^{-\alpha}}= q- \sum_{n\ge 1} (1-q) e^{-n \alpha},
\end{equation} 
we see that $\Delta^w$ is a good product with coefficients in $\Z[q]$, i.e. $\Delta^w \in \mathcal G(\Z[q])$. 

Now we check that the coefficient of $e^{-\beta}$ in $\mathcal M(q) =\sum_{w \in W} \Delta^w$ is an element of $\Z[[q]]$ for $\beta \in Q^+$. For $w \in W$, define \[ \Phi(w) := \{ \alpha \in \Phi^+_{\mathrm{re}} \ |\  w (\alpha) <0 \} = \Phi^+ \cap w^{-1} \Phi^-.\] It is well-known that $|\Phi(w)| = \ell(w)$. Thus we have
\begin{align}  
\Delta^w_{\mathrm{re}}&=\prod_{\alpha\in\Phi(w^{-1})}\left(\frac { q -  e^{- \alpha}}{1 - e^{-\alpha}}\right)\prod_{\alpha\in\Phi^+_{\mathrm{re}}\setminus \Phi(w^{-1})}\left(\frac { 1 - q e^{- \alpha}}{1 - e^{-\alpha}}\right) \nonumber \\ &=q^{\ell(w)} \ \prod_{\alpha\in\Phi(w^{-1})}\left(\frac { 1 -q^{-1}  e^{- \alpha}}{1 - e^{-\alpha}}\right)\prod_{\alpha\in\Phi^+_{\mathrm{re}}\setminus \Phi(w^{-1})}\left(\frac { 1 - q e^{- \alpha}}{1 - e^{-\alpha}}\right).\label{eq-easy-1} 
\end{align} 
For $\beta \in Q^+$, the coefficient of $e^{-\beta}$ in $\Delta^w = \Delta^w_{\mathrm{re}} \Delta_{\mathrm{im}}$, a priori an element in $\mathbb{Z}[[q]]$, is of the form
\[ q^{\ell(w)} p_{\beta,w} \] 
for some $p_{\beta,w} \in \Z((q))$. Recall the {\em height} of $\beta=\sum^n_{i=1}m_i\alpha_i\in Q^+$, $m_i\geq 0$, is defined to be
\[
{\rm ht}(\beta):=\sum^n_{i=1}m_i.
\]
It is easy to observe from \eqref{eq-easy} and \eqref{eq-easy-1} the crude estimate  that the degrees of $p_{\beta,w}$ in $q^{-1}$ and $q$ are both bounded by  ${\rm ht}(\beta)$.  Thus we have $p_{\beta, w}\in \mathbb{Z}[q, q^{-1}]$. Moreover $q^m$ appears in $q^{\ell(w)} p_{\beta,w}$ only if $\ell(w)\leq m+{\rm ht}(\beta)$. Since there are only finitely many $w\in W$ of a given length,  we see that
 \[
 \mathcal{M}(q)=\sum_{w\in W}\Delta^w=\sum_{\beta\in Q^+}\left(\sum_{w\in W} q^{\ell(w)}p_{\beta, w}\right)e^{-\beta}
 \]
 with the coefficient of $e^{-\beta}$ given by a well-defined sum
 \[
 \sum_{w\in W} q^{\ell(w)}p_{\beta, w}\in \mathbb{Z}[[q]].
 \]
In particular, when $\beta =0$, we have $p_{0,w}=1$ for all $w \in W$ and the constant term of $\mathcal M(q)$ is equal to $\sum_{w \in W} q^{\ell(w)} = \chi(q)$. 

We have already seen that $\mathcal M(q)$ is supported on $Q^-$ at the beginning of the proof. Since $\mathcal M(q)$ is also $W$-invariant, it is supported on $Q^-_{\mathrm{im}}$. (See the paragraph after Definition \ref{def.goodproductaction}.)

Using \cite[Proposition~2.2]{MPW}, we may write $\mathcal{M}(q)$ as a product of the form~\eqref{eq.product} with $\lambda_0=0$. Since $\mathcal M(q)$ is supported on $Q^-_{\mathrm{im}}$, no factor corresponding to a real root arises in the product and hence $\mathcal M(q)$ is a good product.
\end{proof}

\begin{remark}
(1) We have the following identity in $\mathcal{G}^W \left(\Z[[q]]\right)$: 
\begin{equation}\label{eq.MacMm}
\mathfrak{m}\mathcal{M}(q)=\Delta_{\text{im}}\chi(q),
\end{equation}
where $\mathfrak{m}$ is  as defined in \cite[equation~(3.5)]{MPW}.
Each of $\frak{m}^{-1}$, $\Delta_{\rm im}$ and $\mathcal{M}(q)$ expands to a formal sum supported on $Q_{\rm im}^-$. 

(2) In the paper \cite{BPGR2}, it was pointed out that $\mathcal M(q)$ is not an element of $\mathcal G^W(\Z[q,q^{-1}])$ but an element of $\mathcal G^W(\Z((q)))$. As a refinement, Lemma \ref{ex.Delta} shows that $\mathcal M(q) \in \mathcal G^W(\Z[[q]])$.

\end{remark}

\medskip

Now we move on to study a character expansion of an element in $\mathcal G^W(\Z[[q]])$.

\begin{definition}\label{def.circleaction}
Fix a Weyl vector $\rho\in\mathfrak{h}^{\ast}$, i.e. a vector satisfying $\rho(\alpha_i^\vee)=1$, for all $i\in I$.  The circle action\footnote{This action is slightly different to the action with the same notation in \cite{Kim--Lee}.} of $W$ on $\mathfrak h^\ast$ is defined by
\begin{equation}\label{eq.circleaction}
w\circ\lambda=w(\lambda+\rho)-\rho.
\end{equation}  
\end{definition}
\begin{ex}
We have 
\begin{equation}\label{eq.wcirc0}
 w \circ 0 = w \rho - \rho,  
\end{equation}
which can be written as a sum of  negative roots. Indeed, one has
\begin{equation}\label{eq.rhominuswrho}
\rho-w\rho=\sum_{\alpha \in \Phi(w^{-1})}\alpha,
\end{equation}
where,  for $w\in W$, 
\begin{equation}\label{eq.Phiw}
\Phi(w):=\Phi^+\cap w^{-1}\Phi^-.
\end{equation}
\end{ex}
Denote by $P$ the weight lattice of $\mathfrak g$, and by $P^+\subset P$ the subset of dominant integral weights. For $\lambda \in P$, define
\[ 
\pi^\lambda := \frac {\sum_{w \in W} (-1)^{\ell(w)} e^{w(\lambda+\rho)}}{\sum_{w \in W} (-1)^{\ell(w)} e^{w\rho}} . 
\]
Recall the denominator identity
\begin{equation}\label{eq.denominatoridentity}
 \sum_{w \in W} (-1)^{\ell(w)} e^{w\rho -\rho} = \prod_{\alpha \in \Phi^+} (1-e^{-\alpha})^{m(\alpha)}.
 \end{equation}
For $\lambda \in P$, define
\begin{equation} \label{eqn-def-xi} \xi^\lambda : = \sum_{w \in W} (-1)^{\ell(w)} e^{w \circ \lambda} .\end{equation}

\begin{lemma} \label{lem-w-circ} \hfill

(1) For $w \in W$, we have 
\[ w \left(\prod_{\alpha \in \Phi^+} (1-e^{-\alpha})^{m(\alpha)}\right)  = (-1)^{\ell(w)} e^{\rho-w\rho} \prod_{\alpha \in \Phi^+} (1-e^{-\alpha})^{m(\alpha)} .\]

(2) For $\lambda \in P$ and $w \in W$, we have 
\begin{align*}
\xi^\lambda &= (-1)^{\ell(w)} \xi^{w \circ \lambda},\\ 
\pi^\lambda &= (-1)^{\ell(w)}\pi^{w \circ \lambda}.
\end{align*}

\end{lemma}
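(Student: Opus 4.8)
The plan is to prove part (1) first, and then derive part (2) as a formal consequence. For part (1), I would start from the denominator identity \eqref{eq.denominatoridentity}, which says
\[
\sum_{w \in W} (-1)^{\ell(w)} e^{w\rho-\rho} = \prod_{\alpha \in \Phi^+}(1-e^{-\alpha})^{m(\alpha)}.
\]
Multiplying through by $e^\rho$, the left-hand side becomes $\sum_{w\in W}(-1)^{\ell(w)}e^{w\rho}$; call this $D$. Then I apply an arbitrary $v \in W$ to $D$ using the series action \eqref{eq.seriesaction}, which just permutes exponents: $v(D) = \sum_{w\in W}(-1)^{\ell(w)}e^{vw\rho}$. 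Reindexing $w \mapsto v^{-1}w$ and using $(-1)^{\ell(v^{-1}w)} = (-1)^{\ell(v)}(-1)^{\ell(w)}$ (a standard fact about the sign character on a Coxeter group), this equals $(-1)^{\ell(v)}D$. Dividing by $e^\rho$ on both sides — and noting $v(e^{-\rho} \cdot (\text{stuff})) = e^{-v\rho} v(\text{stuff})$ — gives $v\left(\prod_{\alpha\in\Phi^+}(1-e^{-\alpha})^{m(\alpha)}\right) = (-1)^{\ell(v)} e^{\rho - v\rho}\prod_{\alpha\in\Phi^+}(1-e^{-\alpha})^{m(\alpha)}$, which is exactly (1). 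The one subtlety to check here is that the product $\prod_{\alpha\in\Phi^+}(1-e^{-\alpha})^{m(\alpha)}$, or rather $D$ itself, genuinely lives in a ring where the $W$-action \eqref{eq.seriesaction} is defined and behaves as a ring homomorphism; since $D$ expands a good product (it is the denominator side viewed via \eqref{eq.product}), Definition \ref{def.goodproductaction} and equation \eqref{eq.seriesaction} apply.

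For part (2), consider $\xi^\lambda = \sum_{w\in W}(-1)^{\ell(w)}e^{w\circ\lambda}$. I would directly compute $\xi^{v\circ\lambda}$ for fixed $v\in W$: by definition $w\circ(v\circ\lambda) = w(v(\lambda+\rho)-\rho+\rho)-\rho = (wv)(\lambda+\rho)-\rho = (wv)\circ\lambda$, so the circle action is a genuine left action of $W$ on $\mathfrak h^*$. Hence
\[
\xi^{v\circ\lambda} = \sum_{w\in W}(-1)^{\ell(w)}e^{(wv)\circ\lambda} = \sum_{u\in W}(-1)^{\ell(uv^{-1})}e^{u\circ\lambda} = (-1)^{\ell(v)}\sum_{u\in W}(-1)^{\ell(u)}e^{u\circ\lambda} = (-1)^{\ell(v)}\xi^\lambda,
\]
again using multiplicativity of the sign character. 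Rearranging gives $\xi^\lambda = (-1)^{\ell(v)}\xi^{v\circ\lambda}$, since $(-1)^{\ell(v)} = (-1)^{-\ell(v)}$. The statement for $\pi^\lambda$ follows by the same reindexing applied to its numerator $\sum_{w\in W}(-1)^{\ell(w)}e^{w(\lambda+\rho)}$: replacing $\lambda$ by $v\circ\lambda$ shifts $\lambda+\rho$ to $v(\lambda+\rho)$, so the numerator becomes $\sum_{w\in W}(-1)^{\ell(w)}e^{wv(\lambda+\rho)} = (-1)^{\ell(v)}\sum_{u\in W}(-1)^{\ell(u)}e^{u(\lambda+\rho)}$, while the denominator $\sum_{w\in W}(-1)^{\ell(w)}e^{w\rho}$ is unchanged; dividing yields $\pi^{v\circ\lambda} = (-1)^{\ell(v)}\pi^\lambda$.

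I do not anticipate a serious obstacle: the whole argument is a reindexing trick powered by the sign character $w\mapsto(-1)^{\ell(w)}$ and the fact that the circle action is a left action. The only point requiring genuine care is making precise the ring in which these infinite formal sums live and confirming that applying $v\in W$ to them via \eqref{eq.seriesaction} is legitimate (finiteness/convergence is automatic because these are good products and the action merely permutes support), and confirming that $w\circ\lambda$ indeed lies in $P$ whenever $\lambda\in P$ so that $\pi^{w\circ\lambda}$ and $\xi^{w\circ\lambda}$ are defined — this follows since $w(\lambda+\rho)\in P$ and $\rho\in P$ (or at least $\rho(\alpha_i^\vee)\in\mathbb Z$), so $w\circ\lambda = w(\lambda+\rho)-\rho\in P$.
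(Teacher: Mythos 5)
Your proposal is correct and follows essentially the same route as the paper: part (1) via the denominator identity plus a reindexing of the Weyl-group sum using the sign character, and part (2) via the same reindexing applied to the numerator of $\xi^\lambda$ (the paper substitutes $w_1\mapsto w_1w^{-1}$ where you use the left-action property $w\circ(v\circ\lambda)=(wv)\circ\lambda$, which is only a cosmetic difference).
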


\begin{proof}
(1) From the denominator identity \eqref{eq.denominatoridentity}, we have 
\begin{align*} 
w \left(\prod_{\alpha \in \Phi^+} (1-e^{-\alpha})^{m(\alpha)}\right) &= \sum_{w_1 \in W} (-1)^{\ell(w_1)} e^{ww_1\rho -w\rho}  \\ 
&= \sum_{w_1 \in W} (-1)^{\ell(w)+\ell(ww_1)} e^{ww_1\rho -\rho} e^{\rho -w\rho}  \\ 
& = (-1)^{\ell(w)} e^{\rho-w\rho} \sum_{w_1 \in W} (-1)^{\ell(ww_1)} e^{ww_1\rho -\rho}   \\ 
&=(-1)^{\ell(w)} e^{\rho-w\rho} \prod_{\alpha \in \Phi^+} (1-e^{-\alpha})^{m(\alpha)}.  
\end{align*}

(2) Let $w \circ \lambda = \mu$. Then  $w(\lambda + \rho) = \mu +\rho$. Now we have
\begin{align*}
\sum_{w_1 \in W} (-1)^{\ell(w_1)} e^{w_1 (\lambda+\rho)} &= \sum_{w_1 \in W} (-1)^{\ell(w)+\ell(w_1w^{-1})} e^{w_1w^{-1} w(\lambda+\rho)} \\
&= (-1)^{\ell(w)} \sum_{w_1 \in W} (-1)^{\ell(w_1w^{-1})} e^{w_1w^{-1} (\mu+\rho)}.
\end{align*}
Multiplying both sides by $e^{-\rho}$, we get  $\xi^\lambda = (-1)^{\ell(w)} \xi^{\mu} = (-1)^{\ell(w)} \xi^{w \circ \lambda}$. Dividing both sides by $\sum_{w_1 \in W} (-1)^{\ell(w_1)} e^{w_1\rho-\rho}$, we obtain $\pi^\lambda = (-1)^{\ell(w)} \pi^{w \circ \lambda}$.
\end{proof}
Consider the following subset of $Q^-$:
\begin{equation}\label{eq.Qprime}
Q':=\bigcap_{w\in W}w\circ Q^-.
\end{equation}
The Weyl group $W$ acts on $Q'$ by the circle action, and so $Q_{\rm im}^-\subset Q'$. 

\begin{lemma} \label{lem-refl} 
{ Assume that $\lambda\in Q'$.  Then the following hold.} 
\begin{enumerate}
\item  There exists  a unique $\mu \in Q^-$ and $v \in W$ such that $\mu +\rho \in P^+$ and $v \circ \lambda = \mu$.
\item  The stabilizer subgroup
\[ W_\lambda^\circ :=\{ w \in W : w\circ \lambda = \lambda \} \]
is generated by reflections in $W$. 
\item If $\lambda \in P^+ \cap Q^-$, then  $W_\lambda^\circ = \{ 1 \}$. 
\end{enumerate}
\end{lemma}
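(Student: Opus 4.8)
The plan is to exploit the standard theory of the $W$-action on the affine-type cone together with the geometry of the circle action, treating $w \circ \lambda = w(\lambda+\rho)-\rho$ as the ordinary (linear) action on the shifted vector $\lambda + \rho$. Set $\Lambda := \lambda + \rho$. The hypothesis $\lambda \in Q' = \bigcap_{w} w\circ Q^-$ translates to $w^{-1}(\Lambda) \in \rho + Q^-$ for every $w\in W$, i.e. $\Lambda(\alpha_i^\vee) \le \rho(\alpha_i^\vee) = 1$ is \emph{not} quite what we want; rather, the relevant statement is that the $W$-orbit of $\Lambda$ stays inside $\rho + Q^-$, which by the classification of such ``almost dominant'' weights (cf. \cite[Ch.~3]{Kac}, especially the description of the Tits cone and its analogue under the shifted action) forces $\Lambda$ to lie in the closure of a suitable fundamental domain up to $W$. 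Concretely, I would first prove part (1): among the $W$-translates $v(\Lambda)$, $v \in W$, there is one, say $\mu + \rho = v(\Lambda)$, with $(\mu+\rho)(\alpha_i^\vee) \ge 0$ for all $i$, i.e. $\mu + \rho \in P^+$ (note $\mu+\rho \in P$ automatically since $\lambda \in Q \subset P$ when... actually one must check $\lambda\in P$; here $Q'\subset Q^- \subset Q \subset P$). Existence follows because the integers $\mathrm{ht}$ of the $Q^-$-parts are bounded above (they live in $\rho + Q^-$), so one can choose $v$ making $\mathrm{ht}(\mu)$ maximal; if some $(\mu+\rho)(\alpha_i^\vee) < 0$ then $s_i(\mu+\rho) = \mu+\rho - (\mu+\rho)(\alpha_i^\vee)\alpha_i$ has strictly larger height, a contradiction. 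Uniqueness of $\mu$ (though not of $v$) follows from the fact that two dominant elements in the same $W$-orbit coincide, a standard consequence of the geometry of the Tits cone.

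Next, for part (2), by part (1) write $v \circ \lambda = \mu$ with $\mu + \rho \in P^+$; then $W_\lambda^\circ = v^{-1} W_\mu^\circ v$, so it suffices to treat $\lambda = \mu$ with $\mu+\rho$ dominant. The group $W_\mu^\circ = \{ w : w(\mu+\rho) = \mu+\rho\}$ is the ordinary stabilizer $W_{\mu+\rho}$ of the dominant weight $\mu+\rho$ under the linear action. By \cite[Proposition~3.12]{Kac} (the stabilizer of a point in the Tits cone is generated by the simple reflections it contains, hence by reflections), $W_{\mu+\rho}$ is generated by reflections; conjugating back, $W_\lambda^\circ$ is generated by reflections in $W$.

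Finally, part (3): if $\lambda \in P^+ \cap Q^-$, then $\lambda + \rho$ satisfies $(\lambda+\rho)(\alpha_i^\vee) = \lambda(\alpha_i^\vee) + 1 \ge 1 > 0$ for all $i$, i.e. $\lambda + \rho$ is \emph{regular} dominant. A regular dominant weight has trivial stabilizer under the linear $W$-action (again \cite[Proposition~3.12]{Kac}: the stabilizer is generated by the $s_i$ fixing it, and none do), so $W_\lambda^\circ = \{1\}$.

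The main obstacle I anticipate is part (1): one must be careful that ``pick $v$ maximizing the height of the $Q^-$-component'' is well-posed, which relies on the boundedness coming from $\lambda \in Q'$ (each $w \circ \lambda \in Q^-$ means $w(\lambda+\rho) - \rho \le 0$, pinning the orbit inside $\rho + Q^-$), and then on the classification of $W$-orbits meeting the ``dominant chamber'' shifted by $\rho$ — essentially one is re-deriving, in the shifted coordinates, that every element of the Tits cone is $W$-conjugate to a unique point of the fundamental domain $\overline{C}$. For uniqueness of $\mu$ one invokes that distinct points of $\overline{C}$ lie in distinct $W$-orbits \cite[Ch.~3]{Kac}. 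The other two parts are then fairly direct translations of Kac's stabilizer results via the conjugation $W_\lambda^\circ = v^{-1} W_{\mu+\rho} v$, with part (3) reduced to the regularity observation $(\lambda+\rho)(\alpha_i^\vee) \ge 1$.
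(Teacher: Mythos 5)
Your proof is correct, and its skeleton matches the paper's: rewrite the circle action as the linear action on $\Lambda=\lambda+\rho$, produce a dominant representative $\mu+\rho=v(\Lambda)$, and read off (2) and (3) from the structure of stabilizers of dominant points. The genuine difference is where the standard chamber geometry comes from. You invoke Kac's Proposition~3.12 (that $\overline{C}$ is a fundamental domain for $W$ on the Tits cone, so the dominant representative of an orbit is unique, and that the stabilizer of a point of $\overline{C}$ is generated by the simple reflections it contains), which is a perfectly legitimate citation. The paper instead re-derives both facts in a self-contained way: for $w=s_{i_1}\cdots s_{i_\ell}$ reduced with $w(\lambda+\rho)=\mu+\rho$ both dominant, pairing with $\alpha_{i_\ell}^\vee$ and using $w(\alpha_{i_\ell}^\vee)<0$ forces $(\lambda+\rho)(\alpha_{i_\ell}^\vee)=0$, and induction on $\ell$ then yields uniqueness in (1), the ``generated by simple reflections'' statement behind (2), and --- since $(\lambda+\rho)(\alpha_{i_\ell}^\vee)>0$ when $\lambda\in P^+$ --- the contradiction proving (3), all in one stroke. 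Your existence argument (maximize $\mathrm{ht}(v\circ\lambda)$ over the orbit, bounded above by $0$ precisely because $\lambda\in Q'$ keeps the whole orbit in $Q^-$) is the paper's iterate-and-terminate argument in different clothing. The only point worth flagging is that Kac states Proposition~3.12 for the real form, so one should note that only the integral values $\Lambda(\alpha_i^\vee)$ and the real span of the roots are involved; this is harmless and the paper glosses over the same point.
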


\begin{proof}
Write $\lambda = \sum_i m_i \alpha_i$ with $m_i \le 0$ for all $i$. If $\lambda +\rho \in P^+$, there is nothing to prove. If not, there exists $j$ such that $\lambda(\alpha^{\vee}_j) \le -2$. We have \[ s_j \circ \lambda = s_j(\lambda +\rho)-\rho = \lambda - (\lambda(\alpha^{\vee}_j) +1) \alpha_j  \in Q^-.  \]
Since $\lambda(\alpha^{\vee}_j)+1 <0$, we have $m_j < m_j -(\lambda(\alpha^{\vee}_j)+1) \le 0$. If $(s_j \circ \lambda) +\rho$ is in $P^+$, we are done. Otherwise, repeat the process with replacing $\lambda$ with $s_j \circ \lambda$. Since the coefficients are increasing and bounded above by $0$, this process must end.

Assume that $\lambda +\rho \in P^+$. Suppose that $w \circ \lambda = \mu$ and  $\mu +\rho \in P^+$  for $w=s_{i_1} s_{i_2} \cdots s_{i_\ell} \neq 1$, a reduced expression.  Then we have $w(\lambda+\rho)=\mu+\rho$. Since $(\lambda+\rho)(\alpha^{\vee}_{i_\ell}) \ge 0$, we have $(\mu +\rho) (w (\alpha^{\vee}_{i_\ell})) \ge 0$. Since $w=s_{i_1} s_{i_2} \cdots s_{i_\ell}$ is a reduced expression, we get $w(\alpha^{\vee}_{i_\ell}) <0$, and $(\mu +\rho)(w(\alpha^{\vee}_{i_\ell}))\le 0$. Thus $(\mu +\rho)(w(\alpha^{\vee}_{i_\ell}))=0$ and $(\lambda +\rho) (\alpha^{\vee}_{i_\ell})=0$. Hence $s_{i_\ell}(\lambda+\rho) =\lambda +\rho$ and $s_{i_\ell} \circ \lambda =\lambda$. By induction, we obtain $\mu=\lambda$, which completes a proof of (1).  We have also shown that the subgroup $W_\lambda^\circ$ is generated by simple reflections for $\lambda +\rho \in P^+$. 

Assume that $\lambda \in P^+ \cap Q^-$, and suppose that $w \circ \lambda = \lambda$ for $w=s_{i_1} s_{i_2} \cdots s_{i_\ell} \neq 1$, a reduced expression. Then $(\lambda+\rho)(\alpha^{\vee}_{i_\ell}) >0$ and the above argument leads to a contradiction. Thus we must have $w=1$. This proves (3).

Now assume that { $\lambda \in Q'$. } By Lemma \ref{lem-refl}(1), there exists $v \in W$ such that $v \circ \lambda +\rho \in P^+$. Then $W^\circ_{v \circ \lambda}$ is generated by simple reflections $s_i$. Hence $W^\circ_\lambda$ is generated by $v^{-1} s_i v$, which are reflections. This completes a proof of (2).
\end{proof}

\begin{lemma}\label{lem.stabilizer}
Assume that $\lambda\in Q'$.  The series $\xi^\lambda \in\mathcal{G}(\mathbb{Z})$ defined in \eqref{eqn-def-xi}  is non-zero if and only if the stabilizer subgroup $W_\lambda^\circ$ of $\lambda$ under the circle action is trivial.
\end{lemma}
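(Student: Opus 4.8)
The plan is to prove both implications separately, with the ``only if'' direction being essentially immediate and the ``if'' direction carrying the substance.

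First, suppose $W_\lambda^\circ$ is non-trivial, and pick a non-identity element $w_0 \in W_\lambda^\circ$. By Lemma \ref{lem-refl}(2), $W_\lambda^\circ$ is generated by reflections; in particular it contains a reflection $s$, which satisfies $\ell(s)$ odd, so $(-1)^{\ell(s)} = -1$. Since $s \circ \lambda = \lambda$, Lemma \ref{lem-w-circ}(2) gives $\xi^\lambda = (-1)^{\ell(s)} \xi^{s \circ \lambda} = -\xi^\lambda$, whence $2\xi^\lambda = 0$ and therefore $\xi^\lambda = 0$ in $\mathcal{S}(\mathbb{Z})$. This proves one direction. (One must check that a reflection, being conjugate to a simple reflection, indeed has odd length; this is standard for Coxeter groups.)

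For the converse, assume $W_\lambda^\circ = \{1\}$; the goal is to exhibit a weight appearing in $\xi^\lambda$ with non-zero coefficient. By Lemma \ref{lem-refl}(1) there is a unique $v \in W$ with $\mu := v \circ \lambda$ satisfying $\mu + \rho \in P^+$, and by Lemma \ref{lem-w-circ}(2) we have $\xi^\lambda = (-1)^{\ell(v)} \xi^\mu$, so it suffices to show $\xi^\mu \neq 0$. Now $\xi^\mu = \sum_{w \in W}(-1)^{\ell(w)} e^{w \circ \mu}$, and I claim the coefficient of $e^{\mu}$ itself is non-zero: the terms contributing $e^{\mu}$ are exactly those $w$ with $w \circ \mu = \mu$, i.e. $w \in W_\mu^\circ$. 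Since $W_\lambda^\circ$ is trivial and $W_\mu^\circ = v W_\lambda^\circ v^{-1}$ is its conjugate, $W_\mu^\circ = \{1\}$ as well (alternatively, apply Lemma \ref{lem-refl}(3) directly to $\mu \in P^+ \cap Q^-$). Hence only $w = 1$ contributes $e^\mu$, with coefficient $(-1)^{\ell(1)} = 1 \neq 0$, so $\xi^\mu \neq 0$ and therefore $\xi^\lambda \neq 0$.

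The main obstacle, and the only point requiring care, is the converse: one must be sure that no \emph{other} $w \in W$ produces the weight $e^\mu$, i.e. that $w \circ \mu = \mu$ forces $w = 1$. This is precisely where the triviality of the stabilizer (transported from $\lambda$ to its dominant representative $\mu$ via Lemma \ref{lem-refl}) is used, and it hinges on the fact established in the proof of Lemma \ref{lem-refl}(3) that a dominant $\mu + \rho$ has trivial circle-stabilizer. Everything else is bookkeeping with the reflection-invariance recorded in Lemma \ref{lem-w-circ}(2).
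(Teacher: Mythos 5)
Your proof is correct and follows essentially the same route as the paper: the vanishing direction uses Lemma \ref{lem-refl}(2) to produce a reflection in the stabilizer and Lemma \ref{lem-w-circ}(2) to get $\xi^\lambda=-\xi^\lambda$, while the non-vanishing direction isolates a single exponential whose coefficient is $\pm1$ when the stabilizer is trivial. Two small remarks: your detour through the dominant representative $\mu=v\circ\lambda$ is unnecessary (the paper simply reads off the coefficient of $e^\lambda$ itself, which is $\sum_{w\in W_\lambda^\circ}(-1)^{\ell(w)}=1$ when $W_\lambda^\circ=\{1\}$), and your parenthetical ``apply Lemma \ref{lem-refl}(3) directly to $\mu\in P^+\cap Q^-$'' is slightly off since Lemma \ref{lem-refl}(1) only guarantees $\mu+\rho\in P^+$, not $\mu\in P^+$ --- but your primary argument via $W_\mu^\circ=vW_\lambda^\circ v^{-1}$ is sound, so nothing breaks.
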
 
\begin{proof}
Suppose that $\xi^\lambda =0$. Then the term $e^{\lambda}$ cancels with $(-1)^{\ell(w)} e^{w \circ \lambda}$ for some $w \neq 1$. In particular, $\lambda=w\circ \lambda$, and the stabilizer subgroup $W_\lambda^\circ$ is not trivial. 

Conversely, assume that the stabilizer subgroup $W_\lambda^\circ$ is not trivial. By Lemma \ref{lem-refl} there exists a reflection $s \in W^\circ_\lambda$ such that $s \circ \lambda =\lambda$. It follows from Lemma \ref{lem-w-circ} that $\xi^\lambda = (-1)^{\ell(s)} \xi^{s \circ \lambda} =- \xi^{\lambda}$. Hence $\xi_\lambda=0$.
\end{proof}
Given $\lambda\in P^+$, let $L(\lambda)$ denote the irreducible highest weight module of $\mathfrak{g}$ with highest weight $\lambda$. The module $L(\lambda)$ admits a weight space decomposition $L(\lambda)=\oplus_{\mu\in\mathfrak{h}^{\ast}}L_{\mu}$. The character $\text{ch}(L(\lambda))$ of $L(\lambda)$ is defined by
\begin{equation}\label{eq.character}
\text{ch}(L(\lambda))=\sum_{\mu \in\mathfrak{h}^{\ast}}\left(\dim L_{\mu}\right)e^{\mu}.
\end{equation}
If $\lambda \in P^+$, then by \cite{Kac} we have
\begin{equation}\label{eq.pilambda}
\pi^\lambda = \text{ch}\left(L(\lambda)\right) .
\end{equation}
Theorem~\ref{lem.suppMPQ-1} is a consequence of the following result.
\begin{theorem}\label{lem.suppMPQ}
Given a Kac--Moody algebra $\mathfrak{g}$, let $P^+$ denote its set of dominant integral weights and $Q^-_{\rm im}$  its negative imaginary root cone. If $f\in\mathcal{G}^W\left(\Z[[q]]\right)$ is such that $\mathrm{supp}(f)\subset Q^-$, then there are 
$c_\lambda \in \mathbb Z[[q]]$, $\lambda\in P^+\cap Q^-_{\rm im}$, such that
\begin{equation}\label{eq.fchar}
f =\sum_{\lambda \in P^+\cap Q^-_{\rm im}} c_\lambda \, \text{\rm ch}\left(L(\lambda)\right) . 
\end{equation} 
\end{theorem}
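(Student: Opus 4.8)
The plan is to prove this by a ``peeling off characters'' argument, proceeding by induction on the partial order $\geq$ restricted to the support of $f$. Since $f$ is $W$-invariant and supported on $Q^-$, by the remark after Definition~\ref{def.goodproductaction} it is in fact supported on $Q^-_{\mathrm{im}} \subset Q'$. The key observation is that because $f$ is $W$-invariant and $\pi^\lambda = \mathrm{ch}(L(\lambda))$ is also $W$-invariant (being a character), the coefficients of $f$ and of each $\mathrm{ch}(L(\lambda))$ are constant on $W$-orbits under the linear action; so it suffices to match coefficients at the ``top'' representatives of each orbit, which by Lemma~\ref{lem-refl}(1) are the points of $P^+ \cap Q^-_{\mathrm{im}}$ (note that $\lambda + \rho \in P^+$ versus $\lambda \in P^+$ differ only on the subset $Q^-_{\mathrm{im}}$ of $Q'$ by standard dominance considerations — one should be slightly careful here and I would state the needed membership claim explicitly, using that $Q^-_{\mathrm{im}} = \bigcap_w w(Q^-)$).

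First I would set up the induction. For $\beta \in Q^+$ write $a_{-\beta}(f) \in \Z[[q]]$ for the coefficient of $e^{-\beta}$ in $f$. I would argue that for each fixed height $N$, there are only finitely many $\lambda \in P^+ \cap Q^-_{\mathrm{im}}$ with $\mathrm{ht}(-\lambda) \leq N$ (this uses that $P^+ \cap Q^-$ is contained in a finitely generated subcone, or more simply that dominant weights bounded in height form a finite set), so the proposed sum in \eqref{eq.fchar} is well-defined level by level. Then I would define the coefficients recursively: having determined $c_\mu$ for all $\mu$ with $\mathrm{ht}(-\mu) < N$, for each $\lambda \in P^+ \cap Q^-_{\mathrm{im}}$ with $\mathrm{ht}(-\lambda) = N$ set
\[
c_\lambda := a_\lambda\!\left(f - \sum_{\substack{\mu \in P^+ \cap Q^-_{\mathrm{im}} \\ \mathrm{ht}(-\mu) < N}} c_\mu \, \mathrm{ch}(L(\mu))\right),
\]
which lies in $\Z[[q]]$ since all quantities on the right do. Here I use that $\mathrm{ch}(L(\mu))$ is supported on $\mu + Q^-$ with coefficient $1$ at $e^\mu$, so $\mathrm{ch}(L(\mu))$ with $\mathrm{ht}(-\mu) = N$ does not contribute to coefficients at height $< N$, and the only height-$N$ dominant contribution to the $e^\lambda$-coefficient is from $\mathrm{ch}(L(\lambda))$ itself (distinct dominant $\lambda, \lambda'$ of the same height are incomparable, so $\lambda \notin \lambda' + Q^-$).

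The crucial step — and the one I expect to be the main obstacle — is showing that after subtracting off \emph{all} the $c_\lambda \mathrm{ch}(L(\lambda))$ the remainder vanishes, equivalently that $g := f - \sum_\lambda c_\lambda \mathrm{ch}(L(\lambda))$, which by construction has $a_\lambda(g) = 0$ for every $\lambda \in P^+ \cap Q^-_{\mathrm{im}}$, is identically zero. Here is where $W$-invariance does the work: $g \in \mathcal{S}(\Z[[q]])$ is $W$-invariant under the linear action \eqref{eq.seriesaction} (since $f$ is, and each $\mathrm{ch}(L(\lambda))$ is), and supported on $Q^-_{\mathrm{im}} \subset Q'$. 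If $g \neq 0$, pick $\nu$ in its support of minimal height; applying a suitable $v \in W$ via Lemma~\ref{lem-refl}(1) — after checking $v$ can be chosen to land in $P^+ \cap Q^-_{\mathrm{im}}$, not just with $v\circ\nu + \rho \in P^+$; this is the delicate point and I would handle it by observing the orbit of $\nu$ under the \emph{linear} action meets $P^+$, using that $Q^-_{\mathrm{im}}$ is $W$-stable and that a linearly-dominant element of $Q^-$ lies in $Q^-_{\mathrm{im}}$ — we get $a_{v\nu}(g) = a_\nu(g) \neq 0$ with $v\nu \in P^+ \cap Q^-_{\mathrm{im}}$, contradicting $a_\lambda(g) = 0$ for all such $\lambda$. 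Hence $g = 0$ and $f = \sum_\lambda c_\lambda \mathrm{ch}(L(\lambda))$, which is \eqref{eq.fchar}. Finally I would note that Theorem~\ref{lem.suppMPQ-1} follows by applying this to $f = \mathcal{M}(q)/\chi(q)$, which lies in $\mathcal{G}^W(\Z[[q]])$ with support in $Q^-_{\mathrm{im}}$ by Lemma~\ref{ex.Delta} together with the fact that $\chi(q) \in \Z[[q]]^\times$.
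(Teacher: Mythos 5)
Your proof is correct, but it takes a genuinely different route from the paper's. The paper multiplies $f$ by the Weyl denominator $\prod_{\alpha\in\Phi^+}(1-e^{-\alpha})^{m(\alpha)}$, uses Lemma~\ref{lem-w-circ}(1) to show that the coefficients of the product $\Xi$ satisfy the anti-invariance $c_\beta=(-1)^{\ell(w)}c_{w\circ\beta}$ under the circle action, groups the terms of $\Xi$ into the alternating sums $\xi^\lambda$ indexed by the orbit representatives $\lambda\in P^+\cap Q^-$ (with Lemmas~\ref{lem-refl} and~\ref{lem.stabilizer} identifying exactly which orbits survive), and then divides back via the Weyl--Kac character formula~\eqref{eq.WCF}. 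You instead exploit the triangularity of the characters $\mathrm{ch}(L(\mu))$ with respect to height (support in $\mu+Q^-$, leading coefficient $1$) to define the $c_\lambda$ recursively, and kill the remainder $g$ by a minimal-height argument using only the linear $W$-invariance of the coefficients. That argument is sound: a minimal-height element $\nu$ of $\mathrm{supp}(g)$ must satisfy $\nu(\alpha_i^\vee)\ge 0$ for all $i$, since otherwise $s_i\nu$ lies in the ($W$-stable) support, still in $Q^-$, with strictly smaller height; and the membership worry you flag resolves itself because $\nu$ already lies in $Q^-_{\mathrm{im}}$ by hypothesis on the support of $g$ (indeed $P^+\cap Q^-=P^+\cap Q^-_{\mathrm{im}}$, since $w\lambda\le\lambda$ for dominant $\lambda$ gives $w^{-1}\lambda\in Q^-$ for all $w$). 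Your route is more elementary --- it needs neither the character formula nor the circle action, only the $W$-invariance of weight multiplicities of integrable modules --- but it produces the $c_\lambda$ only recursively; what the paper's proof buys is the closed-form identification of $c_\lambda$ as the coefficient of $e^\lambda$ in $f\cdot\prod_{\alpha\in\Phi^+}(1-e^{-\alpha})^{m(\alpha)}$, which is exactly what Theorem~\ref{thm.formulaford} relies on in the next section.
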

\begin{proof}
Since $f$ is supported on $Q^-$, we may write the following product as a sum supported on $Q^-$:
\begin{equation}\label{eq.1}
\Xi=f\cdot\prod_{\alpha\in\Phi^+}(1-e^{-\alpha})^{m(\alpha)}=\sum_{\beta\in Q^-}c_{\beta}e^{\beta}.
\end{equation}
As $f$ is invariant under $W$, it follows from Lemma \ref{lem-w-circ}(1) that 
\begin{multline} 
w\left(f\cdot\prod_{\alpha\in\Phi^+}(1-e^{-\alpha})^{m(\alpha)}\right)= \sum_{\beta\in Q^-}c_{\beta}e^{w\beta}\\
= (-1)^{\ell (w)} e^{\rho - w\rho} f\cdot\prod_{\alpha\in\Phi^+}(1-e^{-\alpha})^{m(\alpha)}=  \sum_{\gamma \in Q^-} (-1)^{\ell(w)} c_{\gamma}e^{\rho -w \rho+\gamma}.
\end{multline}
Comparing coefficients, we see that for $\beta \in Q^-$,
\[c_{\beta}=(-1)^{\ell(w)}c_{w\circ\beta}. \] 
Moreover, $c_\beta =0$ unless { $\beta\in Q'$, i.e. $\Xi$ is supported on $Q'$. } If $\lambda +\rho \in P^+$ and $\lambda \not \in P^+ \cap Q^-$ for $\lambda \in Q^-$, then there exists $\alpha^{\vee}_i$ such that $(\lambda+\rho) (\alpha^{\vee}_i) =0$ and $s_i\circ \lambda = \lambda$. Thus $\xi^\lambda =0$ by Lemma~\ref{lem.stabilizer}.

By Lemma~\ref{lem-refl}(1) and the above argument, we group the terms of equation~\eqref{eq.1} to get a sum over  $P^+\cap Q^-$, { which is the subset of representatives $\lambda$ of the $\circ$-action of $W$ on $Q'$ such that $\xi^\lambda\neq 0$:}
\begin{equation}\label{eq.2}
\Xi= \sum_{\beta\in Q'}c_{\beta}e^{\beta}=\sum_{\lambda\in P^+ \cap Q^- }  c_{\lambda}\xi^{\lambda}.
\end{equation}
On the other hand, for $\lambda \in P^+$, Weyl's character formula implies
\begin{equation}\label{eq.WCF}
\xi^{\lambda}=\text{ch}\left(L(\lambda)\right) \prod_{\alpha\in\Phi^+}(1-e^{-\alpha})^{m(\alpha)} .
\end{equation}
The result follows from combining \eqref{eq.2} with \eqref{eq.WCF}, noting that $f$ is in fact supported on $Q^-_{\rm im}$.
\end{proof}

\begin{remark}

As mentioned in the introduction, we recover \eqref{chi-s} as an immediate consequence of Theorem \ref{lem.suppMPQ}, since $P^+ \cap Q^-_{\rm im}= \{ 0 \}$
for finite root systems. In the affine case, we have $P^+ \cap Q^-_{\rm im}= \mathbb Z_{\leq 0} \cdot\delta$ with the minimal positive imaginary root $\delta$, and  the theorem shows that the right-hand side of \eqref{eq.fchar} only involves imaginary roots.

\end{remark}

\section{A formula for the character coefficients}\label{section.formula}

In this section, we derive a formula for the coefficients in the expansion of $\mathcal M(q)$ into a sum of characters. We begin with the definition of a function which will play an important role in what follows.

\begin{definition}[\cite{KL11, Kim--Lee}] \label{def-kl}
The function $H:Q^+\rightarrow\mathbb{Z}[q]$ is defined by the generating series in $\mathcal{G}(\mathbb{Z}[q])$:
\begin{equation}\label{eq.Hdef}
\sum_{\mu \in Q^+} H(\mu;q) e^{-\mu} = \prod_{\alpha \in \Phi^+} (1-q e^{-\alpha})^{m(\alpha)} ,
\end{equation}
where $m(\alpha)$ is the multiplicity of $\alpha$.
When we do not need to specify $q$, we will frequently write $H(\mu)=H(\mu;q)$ .
\end{definition}

\begin{remark}
In \cite{KL11, Kim--Lee}, the function $H$ was denoted by $H_\rho$. See (2-13) in \cite{Kim--Lee}. 

\end{remark}

\begin{definition}\label{def.admissible}
Let $\mu \in Q^+$, and $\mathscr P:= \{ (\alpha; i) : \alpha \in \Phi^+, i=1, 2, \dots , m(\alpha) \}$. An {\em admissible partition} of $\mu$ is a finite set $\mathfrak p \subset \mathscr P$ such that $\sum_{(\alpha, i) \in \mathfrak p} \alpha =\mu$. Let $\mathcal P(\mu)$ be the set of admissible partitions of $\mu$. Given $\mathfrak p\in\mathcal P(\mu)$, we will refer to an element $(\alpha,i)\in\mathfrak{p}$ as {\em part} of $\mathfrak{p}$, and denote the number of parts in $\mathfrak p$ by $|\mathfrak p|$. 
\end{definition}
Examples of admissible partitions are given in Appendix \ref{section.rank2}.
\begin{lemma}
We have 
\begin{equation} \label{eqn-H-formula}
H(\mu) = \sum_{\mathfrak{p}\in\mathcal{P}(\mu)} (-q)^{|\mathfrak p|}.
\end{equation} 
\end{lemma}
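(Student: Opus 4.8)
The plan is to expand the product $\prod_{\alpha \in \Phi^+} (1-qe^{-\alpha})^{m(\alpha)}$ directly by treating each factor as indexed by a pair $(\alpha,i)$ with $\alpha \in \Phi^+$ and $1 \le i \le m(\alpha)$, i.e.\ by the set $\mathscr P$ from Definition~\ref{def.admissible}, and then reading off the coefficient of $e^{-\mu}$. First I would write
\[
\prod_{\alpha \in \Phi^+} (1-qe^{-\alpha})^{m(\alpha)} = \prod_{(\alpha,i) \in \mathscr P} (1-q e^{-\alpha}),
\]
which is legitimate as an identity in $\mathcal{G}(\mathbb Z[q])$ since $\Delta$-type products of this form are exactly the good products the paper works with, and the multiplicity exponent $m(\alpha)$ just repeats the factor $1-qe^{-\alpha}$ that many times.

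Next I would expand the (formally infinite) product. Each factor $(1-qe^{-\alpha})$ contributes either the term $1$ or the term $-qe^{-\alpha}$; choosing the non-trivial term on a finite subset $\mathfrak p \subset \mathscr P$ and the trivial term everywhere else produces the monomial $(-q)^{|\mathfrak p|} e^{-\sum_{(\alpha,i)\in\mathfrak p}\alpha}$. Summing over all finite subsets $\mathfrak p$ gives
\[
\prod_{(\alpha,i) \in \mathscr P} (1-q e^{-\alpha}) = \sum_{\mathfrak p \subset \mathscr P \text{ finite}} (-q)^{|\mathfrak p|} e^{-\sum_{(\alpha,i) \in \mathfrak p} \alpha}.
\]
Collecting the coefficient of a fixed $e^{-\mu}$ amounts to summing $(-q)^{|\mathfrak p|}$ over those finite $\mathfrak p$ with $\sum_{(\alpha,i)\in\mathfrak p}\alpha = \mu$, which is precisely the set $\mathcal P(\mu)$ of admissible partitions of $\mu$. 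Comparing with the defining generating series \eqref{eq.Hdef}, $\sum_{\mu \in Q^+} H(\mu;q) e^{-\mu} = \prod_{\alpha \in \Phi^+}(1-qe^{-\alpha})^{m(\alpha)}$, yields $H(\mu) = \sum_{\mathfrak p \in \mathcal P(\mu)} (-q)^{|\mathfrak p|}$, as claimed.

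The only point requiring a little care — and the main (mild) obstacle — is the legitimacy of the infinite product expansion: one must check that for each fixed $\mu \in Q^+$ the coefficient of $e^{-\mu}$ is a well-defined element of $\mathbb Z[q]$, i.e.\ that $\mathcal P(\mu)$ is finite and that only finitely many $\mathfrak p$ contribute to any given power of $q$. This follows because any $\mathfrak p \in \mathcal P(\mu)$ consists of parts $(\alpha,i)$ with $\alpha \in \Phi^+$, $\mathrm{ht}(\alpha) \ge 1$ and $\sum \alpha = \mu$, so $|\mathfrak p| \le \mathrm{ht}(\mu)$ and the $\alpha$'s are drawn from the finite set $\{\alpha \in \Phi^+ : \alpha \le \mu\}$ with bounded multiplicities; hence $\mathcal P(\mu)$ is finite and $H(\mu;q)$ is a polynomial of degree at most $\mathrm{ht}(\mu)$. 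This finiteness is exactly what makes the formal manipulation of the good product valid, and it is consistent with the codomain $\mathbb Z[q]$ asserted in Definition~\ref{def-kl}. The rest is the bookkeeping already sketched above.
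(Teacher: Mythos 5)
Your proposal is correct and is essentially the paper's own argument, which simply expands the product in \eqref{eq.Hdef} and reads off the coefficient of $e^{-\mu}$; your extra finiteness check (that $|\mathfrak p|\le \mathrm{ht}(\mu)$ and the parts lie in the finite set $\{\alpha\in\Phi^+:\alpha\le\mu\}$) is a sound and welcome justification of that expansion, but not a different route.
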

\begin{proof}
Equation~\eqref{eqn-H-formula} follows from expanding the product in equation~\eqref{eq.Hdef} and computing the coefficient of $e^{-\mu}$.
\end{proof}

We now prove equation \eqref{eq.formulad-1}, which we state below as a theorem for ease of reference.
\begin{theorem}\label{thm.formulaford} 
For $\lambda \in P^+\cap Q^-_{\rm im}$, define $d_{\lambda}$ by equation~\eqref{eq.clambda-1}. Then we have 
\begin{equation}\label{eq.formulad}
\chi(q) \, d_{\lambda} = \sum_{w \in W} (-1)^{\ell(w)}H(-w \circ \lambda). 
\end{equation}
\end{theorem}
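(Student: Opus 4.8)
The plan is to clear the Weyl denominator from the character expansion \eqref{eq.clambda-1}, evaluate the resulting identity in the ring of Section~\ref{section.Background} in two different ways, and then compare the coefficient of $e^{\lambda}$ on the two sides. Write $D:=\prod_{\alpha\in\Phi^+}(1-e^{-\alpha})^{m(\alpha)}\in\mathcal{G}(\Z)$; every product that appears below is either a multiplication in $\mathcal{G}$ or a product of a good product with a formal sum supported on $Q^-$, so it is well defined. First I would use that $\chi(q)$ is a unit in $\Z[[q]]$ (Example~\ref{ex.Poincare}) to rewrite \eqref{eq.clambda-1} as $\mathcal{M}(q)=\chi(q)\sum_{\lambda\in P^+\cap Q^-_{\rm im}}d_{\lambda}\,\mathrm{ch}(L(\lambda))$, multiply through by $D$, and apply Weyl's character formula in the form \eqref{eq.WCF}, $\mathrm{ch}(L(\lambda))\,D=\xi^{\lambda}$, to get
\[
\mathcal{M}(q)\cdot D=\chi(q)\sum_{\lambda\in P^+\cap Q^-_{\rm im}}d_{\lambda}\,\xi^{\lambda}=\chi(q)\sum_{\lambda\in P^+\cap Q^-_{\rm im}}\sum_{v\in W}(-1)^{\ell(v)}d_{\lambda}\,e^{v\circ\lambda}.
\]

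Next I would compute $\mathcal{M}(q)\cdot D$ directly from $\mathcal{M}(q)=\sum_{w\in W}\Delta^{w}$. For each $w$, Lemma~\ref{lem-w-circ}(1) gives $D=(-1)^{\ell(w)}e^{w\rho-\rho}\,w(D)$, and since $W$ acts multiplicatively on $\mathcal{G}(\Z[q])$,
\[
\Delta^{w}\cdot D=(-1)^{\ell(w)}e^{w\rho-\rho}\,w(\Delta)\,w(D)=(-1)^{\ell(w)}e^{w\rho-\rho}\,w(\Delta\cdot D).
\]
By Definition~\ref{def-kl}, $\Delta\cdot D=\prod_{\alpha\in\Phi^+}(1-qe^{-\alpha})^{m(\alpha)}=\sum_{\mu\in Q^+}H(\mu)\,e^{-\mu}$, and by \eqref{eq.seriesaction} the action of $w$ is the substitution $e^{-\mu}\mapsto e^{-w\mu}$. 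Using $w\rho-\rho-w\mu=w(\rho-\mu)-\rho=w\circ(-\mu)$ and summing over $w$ yields
\[
\mathcal{M}(q)\cdot D=\sum_{w\in W}\sum_{\mu\in Q^+}(-1)^{\ell(w)}H(\mu)\,e^{w\circ(-\mu)}.
\]

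Finally I would read off the coefficient of $e^{\lambda}$ for a fixed $\lambda\in P^+\cap Q^-_{\rm im}$ from both expressions. On the character side, the term $(-1)^{\ell(v)}d_{\lambda'}e^{v\circ\lambda'}$ contributes only when $v\circ\lambda'=\lambda$; then $\lambda$ and $\lambda'$ lie in one $\circ$-orbit with $\lambda+\rho,\lambda'+\rho\in P^+$, so uniqueness in Lemma~\ref{lem-refl}(1) forces $\lambda'=\lambda$, whence $v\in W^{\circ}_{\lambda}=\{1\}$ by Lemma~\ref{lem-refl}(3). Hence the coefficient of $e^{\lambda}$ on that side is $\chi(q)d_{\lambda}$. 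On the $H$-side, for each $w$ there is at most one $\mu\in Q^+$ with $w\circ(-\mu)=\lambda$, namely $\mu=-w^{-1}\circ\lambda$ whenever this lies in $Q^+$; and since $Q^-_{\rm im}=\bigcap_{u\in W}u(Q^-)$ is $W$-stable and contained in $Q^-$, one has $v\circ\lambda=v\lambda+(v\rho-\rho)\in Q^-$ for every $v\in W$ by \eqref{eq.rhominuswrho}, so in fact $-w^{-1}\circ\lambda\in Q^+$ for all $w$. Thus the coefficient of $e^{\lambda}$ on that side is $\sum_{w\in W}(-1)^{\ell(w)}H(-w^{-1}\circ\lambda)$, which equals $\sum_{w\in W}(-1)^{\ell(w)}H(-w\circ\lambda)$ after relabelling $w\mapsto w^{-1}$. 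Equating the two gives \eqref{eq.formulad}; the equality holds in $\Z[[q]]$, so in particular the infinite sum on the right converges $q$-adically.

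The two evaluations of $\mathcal{M}(q)\cdot D$ are routine given the formalism of Section~\ref{section.Background}. The step I expect to require the most care is the coefficient comparison: one must check that on the character side only the single pair $(\lambda',v)=(\lambda,1)$ survives — this is exactly where the uniqueness of orbit representatives and the triviality of $W^{\circ}_{\lambda}$ from Lemma~\ref{lem-refl} are used — and that $-w\circ\lambda$ never leaves $Q^+$, so that the right-hand side of \eqref{eq.formulad}, in which $H$ is defined only on $Q^+$, is literally meaningful.
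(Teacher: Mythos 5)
Your proposal is correct and follows essentially the same route as the paper's proof: both clear the Weyl denominator, use Lemma~\ref{lem-w-circ}(1) to express $\mathcal{M}(q)\cdot\prod_{\alpha\in\Phi^+}(1-e^{-\alpha})^{m(\alpha)}$ as the double sum $\sum_{w,\mu}(-1)^{\ell(w)}H(\mu)\,e^{w\circ(-\mu)}$, and then extract the coefficient of $e^{\lambda}$ via the orbit analysis of Lemma~\ref{lem-refl}. The coefficient comparison you spell out (that only the pair $(\lambda,1)$ survives on the character side, and that $-w\circ\lambda\in Q^+$ so $H$ is defined there) is exactly the content the paper packages into equation~\eqref{eq.2} from the proof of Theorem~\ref{lem.suppMPQ} and then cites.
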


\begin{proof}
By definition, we have
\begin{align*}
\mathcal{M}(q)&=\sum_{w \in W} \Delta^w = \sum_{w \in W} \prod_{\alpha \in \Phi^+} \frac{(1-qe^{-w \alpha})^{m(\alpha)}}{(1-e^{-w \alpha})^{m(\alpha)} } \\
&=  \sum_{w \in W}  \frac{\sum_{\mu \in Q^+} H(\mu) e^{-w\mu}}{\prod_{\alpha \in \Phi^+} (1-e^{-w \alpha})^{m(\alpha)} } \\
&= \sum_{ \mu \in Q^-} H(-\mu) \sum_{w \in W} \frac{e^{w \mu}}{\prod_{\alpha \in \Phi^+} (1-e^{-w\alpha})^{m(\alpha)} }.
\end{align*}
Using Lemma \ref{lem-w-circ} (1), we deduce that
\[
\mathcal{M}(q)=\frac{1}{\prod_{\alpha\in\Phi^+}(1-e^{-\alpha})^{m(\alpha)}}\sum_{\mu \in Q^-}\sum_{w\in W}(-1)^{\ell(w)}H(-\mu)e^{w\circ\mu}.
\]
As in the proof of Theorem \ref{lem.suppMPQ}, put
\[
\Xi:=\mathcal{M}(q)\prod_{\alpha\in\Phi^+}(1-e^{-\alpha})^{m(\alpha)}=\sum_{\mu \in Q^-}\sum_{w\in W}(-1)^{\ell (w)}H(-\mu)e^{w\circ\mu}.
\]
Since $\Xi$ is supported on $Q'$, we may rewrite  the above double sum as
\[
\Xi=\sum_{\beta\in Q'}\sum_{w\in W}(-1)^{\ell(w)}H(-w\circ\beta) e^{\beta}.
\]
The theorem then follows from (\ref{eq.2}).
\end{proof}

\begin{ex}\label{ex.c0}
Given $w\in W$,  write $w=s_{i_1} \cdots s_{i_\ell}$ as a reduced expression.  If $\Phi(w^{-1})$ is as defined in equation~\eqref{eq.Phiw}, then
\begin{equation}
\Phi(w^{-1}) =\left \{ \alpha_{i_1}, s_{i_1}(\alpha_{i_2}), \dots , s_{i_1} \cdots s_{i_{\ell-1}}(\alpha_{i_\ell})  \right\} , 
\end{equation}
and
\begin{equation}
 w \circ 0=\rho - w \rho = \sum_{\alpha \in \Phi(w^{-1})} \alpha = \alpha_{i_1}+ s_{i_1}(\alpha_{i_2})+ \cdots + s_{i_1} \cdots s_{i_{\ell-1}}(\alpha_{i_\ell}). \label{eqn-w0-rho}
\end{equation}
Suppose that
\[ 
\rho - w \rho = \beta_1 + \beta_2 +\cdots + \beta_k 
\]
for some positive roots $\beta_1, \dots , \beta_k \in \Phi^+$. Note that we have 
\[ s_{i_1} (\rho -w\rho)  \not \in Q^+ .\]
Since $s_{i_1}$ keeps $\Phi^+$ except $\alpha_{i_1}$, one of the $\beta_i$'s must be equal to $\alpha_{i_1}$. Then $s_{i_1}(\rho - w \rho -\alpha_{i_1})$ is equal to $\rho - w' \rho$ where $w' =s_{i_2}\cdots s_{i_\ell}$. Arguing by induction on $\ell(w)$, we deduce that~\eqref{eqn-w0-rho} is the unique decomposition of $w\circ0$ into a sum of positive roots. Now it follows from \eqref{eqn-H-formula} that
\begin{equation}
H(-w \circ 0) = H(\rho -w\rho) = (-q)^{\ell(w)},
\end{equation}
and so the formula \eqref{eq.formulad} yields \[ d_0=1 . \]
\end{ex}

\begin{lemma}\label{lem.r2coeffsum0}
For all nonzero $\lambda\in P^+ \cap Q^-$ and $w\in W$, the coefficients of $H(-w\circ\lambda)$ sum to zero.
\end{lemma}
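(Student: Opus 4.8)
The statement "the coefficients of $H(-w\circ\lambda)$ sum to zero" should be read as: the polynomial $H(\mu;q)\in\mathbb Z[q]$, evaluated at $q=1$, vanishes whenever $\mu = -w\circ\lambda$ for some $w\in W$ and some nonzero $\lambda\in P^+\cap Q^-$. So the plan is to show $H(-w\circ\lambda;1)=0$ under these hypotheses.

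First I would reduce to the evaluation $q=1$ of the defining generating series \eqref{eq.Hdef}. Setting $q=1$ there gives
\[
\sum_{\mu\in Q^+}H(\mu;1)\,e^{-\mu} \;=\; \prod_{\alpha\in\Phi^+}(1-e^{-\alpha})^{m(\alpha)}.
\]
By the denominator identity \eqref{eq.denominatoridentity}, the right-hand side equals $\sum_{v\in W}(-1)^{\ell(v)}e^{v\rho-\rho} = \sum_{v\in W}(-1)^{\ell(v)}e^{-(\rho-v\rho)} = \xi^0$. Hence $H(\mu;1)$ is precisely the coefficient of $e^{-\mu}$ in $\xi^0$: it is $(-1)^{\ell(v)}$ if $\mu=\rho-v\rho$ for a (necessarily unique, by Example~\ref{ex.c0}) $v\in W$, and $0$ otherwise.

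Next I would apply this with $\mu=-w\circ\lambda$. Since $\lambda\in P^+\cap Q^-$ and $\lambda\ne 0$, Lemma~\ref{lem-refl}(3) gives $W_\lambda^\circ=\{1\}$, so by Lemma~\ref{lem.stabilizer} we have $\xi^\lambda\ne 0$; but more to the point, I claim $-w\circ\lambda$ is never of the form $\rho-v\rho$. Indeed, $\rho-v\rho = -v\circ 0$, so $-w\circ\lambda = \rho-v\rho$ would force $w\circ\lambda = v\circ 0$, i.e. $w(\lambda+\rho) = v\rho$, i.e. $\lambda+\rho = w^{-1}v\rho = (w^{-1}v)\circ 0 + \rho$, so $\lambda = (w^{-1}v)\circ 0$. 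But $(w^{-1}v)\circ 0 = \rho - (w^{-1}v)\rho$ lies in $-Q^+\cap$ (the $W$-orbit of $0$ under $\circ$), and in particular $\lambda = u\circ 0$ for $u=w^{-1}v$ means $\lambda+\rho = u\rho$ lies in the $W$-orbit of $\rho$; since $\lambda\in P^+\cap Q^-$ we have $\lambda+\rho\in P^+$ strictly dominant, and the only element of the $W$-orbit of $\rho$ that is dominant is $\rho$ itself (regularity of $\rho$), forcing $\lambda=0$, a contradiction. Therefore $H(-w\circ\lambda;1)=0$, which is exactly the assertion that the coefficients of the polynomial $H(-w\circ\lambda;q)$ sum to zero.

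The main obstacle is pinning down the correct interpretation of the statement and then the orbit argument in the last paragraph — specifically, verifying cleanly that no $-w\circ\lambda$ with $\lambda\in P^+\cap Q^-$ nonzero can coincide with some $\rho-v\rho$. This rests on the regularity of $\rho$ (its stabilizer in $W$ is trivial, so its only dominant $W$-conjugate is itself), together with the translation between the linear and circle actions. Everything else — the $q=1$ specialization and the identification of $H(\mu;1)$ as a coefficient in $\xi^0$ via the denominator identity — is routine given the machinery already set up in Section~\ref{section.Background}.
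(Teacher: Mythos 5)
Your proof is correct and follows essentially the same route as the paper: both reduce the claim to showing that $H(\mu;1)$ is supported on $\{\rho-v\rho : v\in W\}$ (you rederive this from the denominator identity where the paper cites \cite[Lemma~3.18]{Kim--Lee}) and then rule out $-w\circ\lambda=\rho-v\rho$ via the equivalent statement $\lambda+\rho\neq u\rho$, which in both cases comes down to $\rho$ being regular dominant --- the paper checks directly that $v\rho-\rho\notin P^+$ for $v\neq 1$, while you invoke uniqueness of the dominant representative in the orbit of $\rho$. (There is an inconsequential sign slip in your aside ``$(w^{-1}v)\circ 0=\rho-(w^{-1}v)\rho$''; it should be $(w^{-1}v)\rho-\rho$, but the main thread of the argument does not use it.)
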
   

\begin{proof}
From \cite[Lemma~3.18]{Kim--Lee}, we have
\begin{equation} \label{eqn-kl}
H(\mu;1)=
\begin{cases} 
(-1)^{\ell(w)}, & \text{if } \rho-w\rho =\mu \text{ for some } w \in W, \\ 
0, & \text{otherwise}.
\end{cases}
\end{equation}
Therefore, it suffices to show that 
\begin{equation}\label{eq.suffices} 
- w \circ \lambda = - (w (\lambda+\rho)-\rho) =\rho - w(\lambda +\rho) \neq \rho - v \rho 
\end{equation} 
for any $v \in W$. Equation~\eqref{eq.suffices} is equivalent to $\lambda+\rho\neq w^{-1}v\rho$, and so it is enough to show, for any $v\in W$,
\[ 
\lambda \neq v \rho -\rho .
\]
If $v=1$ there is nothing to prove. Consider an arbitrary $v \neq 1$, and write $v^{-1}$ as a reduced word $s_{i_1} \cdots s_{i_k}$. Then we have
\[ 
\rho (v^{-1} \alpha^{\vee}_{i_k}) <0 , 
\]
and 
\[ 
(v\rho-\rho) (\alpha^{\vee}_{i_k}) =v\rho(\alpha^{\vee}_{i_k}) -\rho(\alpha^{\vee}_{i_k}) = \rho (v^{-1}\alpha^{\vee}_{i_k}) -1 \le -2 . 
\]
Thus $v\rho -\rho \notin P^+$. Since $\lambda \in P^+$, we have $\lambda \neq v \rho -\rho$. 
\end{proof}

\begin{definition}
Let $\lambda\in Q^-$ and 
\begin{equation}\label{eq.frakp}
\mathfrak p=\{ (\beta_1; m_1), (\beta_2; m_2), \dots , (\beta_t; m_t)  \}\in\mathcal{P}(-\lambda).
\end{equation}
Given $w \in W$, we define
\begin{equation}\label{eq.mpw}
m(\mathfrak p,w) = t-2 \times  \#\{(\beta_i; j) \in \mathfrak p \, : \,  w\beta_i<0\}. 
\end{equation}
\end{definition}

With $\frak p$ as in equation~\eqref{eq.frakp}, we define 
\[ 
\phi_i(\mathfrak p) := \begin{cases} \{ (s_i\beta_1; m_1),  \dots , (s_i\beta_t; m_t) , (\alpha_i; 1) \}, & \text{ if $\beta_j \neq \alpha_i$ for any $j$}, \\  
\{ (s_i\beta_1; m_1), \dots, (s_i\beta_{j-1}; m_{j-1}), (s_i\beta_{j+1}; m_{j+1}), \dots , (s_i\beta_{t}; m_{t})  \}, & \text{ if $\beta_j = \alpha_i$ for some $j$}. \end{cases} 
\] 
Since \begin{equation*} \label{eqn-siw} - s_i \circ \lambda = - s_i(\lambda +\rho) +\rho = - s_i(\lambda) +\alpha_i , \end{equation*}
we see that $\phi_i(\mathfrak p)\in\mathcal{P}(-s_i \circ \lambda)$. In other words, $\phi_i$ defines a map $\mathcal P(-\lambda) \rightarrow \mathcal P(-s_i \circ \lambda)$.  Replacing $\lambda$ with $s_i\circ \lambda$, we obtain similarly a map from $\mathcal{P}(-s_i\circ \lambda)$ to $\mathcal{P}(- \lambda)$. One can check that these maps are inverses to each other, and so the map $\phi_i$ is a bijection from $\mathcal{P}(-\lambda)$ to $\mathcal{P}(-s_i \circ \lambda)$.

\begin{lemma} \label{lem-bij-mphi}

If $\ell(ws_i) =\ell(w)+1$, then
\[ 
m(\phi_i(\mathfrak p), w) = m(\mathfrak p, w s_i) +1 . 
\]

\end{lemma}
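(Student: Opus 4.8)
The plan is to unwind both sides of the claimed identity directly from the definition~\eqref{eq.mpw} of $m(\cdot,\cdot)$ and the case division in the definition of $\phi_i$, and to check that the combinatorics of signs of roots works out under the hypothesis $\ell(ws_i)=\ell(w)+1$. Write $\mathfrak p = \{(\beta_1;m_1),\dots,(\beta_t;m_t)\} \in \mathcal P(-\lambda)$, and recall that the parts of $\phi_i(\mathfrak p)$ are obtained by applying $s_i$ to each first coordinate $\beta_j$ and then either adjoining $(\alpha_i;1)$ (when no $\beta_j$ equals $\alpha_i$) or deleting the part with $\beta_j=\alpha_i$. The key elementary fact I would use throughout is that $s_i$ permutes $\Phi^+\setminus\{\alpha_i\}$ and sends $\alpha_i$ to $-\alpha_i$; hence for a positive root $\beta\neq\alpha_i$ one has $w s_i(\beta) < 0 \iff w(s_i\beta) < 0$, i.e. the ``twisted'' sign count for $\mathfrak p$ relative to $ws_i$ matches the sign count for $\phi_i(\mathfrak p)$ relative to $w$, on the corresponding parts.

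First I would record the size relation: $|\phi_i(\mathfrak p)| = t+1$ in the first case and $t-1$ in the second. Next I would compare the negativity counts
\[
N(\mathfrak p, w s_i) := \#\{(\beta_j;k)\in\mathfrak p : w s_i(\beta_j) < 0\}, \qquad N(\phi_i(\mathfrak p), w) := \#\{(\gamma;k)\in\phi_i(\mathfrak p) : w(\gamma) < 0\}.
\]
For the parts $\beta_j \neq \alpha_i$, the bijection $(\beta_j;m_j) \mapsto (s_i\beta_j;m_j)$ preserves negativity as noted above, so these contribute equally to $N(\mathfrak p,ws_i)$ and $N(\phi_i(\mathfrak p),w)$. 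It remains to account for the part(s) with $\beta_j = \alpha_i$ and for the adjoined part $(\alpha_i;1)$. Here the hypothesis $\ell(ws_i)=\ell(w)+1$ enters: it is equivalent to $w(\alpha_i) > 0$, equivalently $ws_i(\alpha_i) = -w(\alpha_i) < 0$. In the first case (no $\beta_j=\alpha_i$), the extra part $(\alpha_i;1)$ of $\phi_i(\mathfrak p)$ satisfies $w(\alpha_i)>0$, so it is \emph{not} counted in $N(\phi_i(\mathfrak p),w)$; hence $N(\phi_i(\mathfrak p),w) = N(\mathfrak p, ws_i)$, and $m(\phi_i(\mathfrak p),w) = (t+1) - 2N(\phi_i(\mathfrak p),w) = (t - 2N(\mathfrak p,ws_i)) + 1 = m(\mathfrak p, ws_i)+1$. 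In the second case (some $\beta_j = \alpha_i$, necessarily unique since the $\beta$'s in an admissible partition with repeated index carry distinct second coordinates — here there is a single part with first coordinate $\alpha_i$ because $m(\alpha_i)=1$), that part satisfies $ws_i(\alpha_i) < 0$, so it \emph{is} counted in $N(\mathfrak p,ws_i)$ but is removed in passing to $\phi_i(\mathfrak p)$; thus $N(\phi_i(\mathfrak p),w) = N(\mathfrak p,ws_i) - 1$, and $m(\phi_i(\mathfrak p),w) = (t-1) - 2(N(\mathfrak p,ws_i)-1) = (t - 2N(\mathfrak p,ws_i)) + 1 = m(\mathfrak p,ws_i) + 1$. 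Either way the identity holds.

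I expect the only real subtlety — hence the step to state carefully — to be the bookkeeping in the case $\beta_j = \alpha_i$: one must be sure that exactly one part of $\mathfrak p$ has first coordinate $\alpha_i$ (so that deleting it changes $t$ by exactly $1$ and changes the negativity count by exactly $1$), which uses $m(\alpha_i) = 1$ for the simple real root $\alpha_i$, and one must correctly match the sign of that deleted part against the sign of the adjoined part in the reverse map. Everything else is the routine observation that $s_i$ preserves positivity away from $\alpha_i$ together with the standard equivalence $\ell(ws_i) = \ell(w)+1 \iff w(\alpha_i) > 0$.
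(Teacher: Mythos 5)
Your proposal is correct and follows essentially the same argument as the paper: both proofs track how the part count and the negativity count change between $\mathfrak p$ (relative to $ws_i$) and $\phi_i(\mathfrak p)$ (relative to $w$), using that $ws_i(\beta)=w(s_i\beta)$ on the transported parts and that $\ell(ws_i)=\ell(w)+1$ forces $w\alpha_i>0$ to handle the adjoined or deleted part $(\alpha_i;1)$. The only cosmetic difference is that the paper parametrizes by the image $\phi_i(\mathfrak p)$ while you parametrize by $\mathfrak p$; the bookkeeping is identical.
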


\begin{proof}
 Consider $\phi_i(\mathfrak p)=\{ (\beta_1; m_1),  \dots , (\beta_t; m_t)  \}\in\mathcal{P}(- \lambda)$. First assume that   \[ \mathfrak p =\{ (s_i\beta_1; m_1),  \dots , (s_i\beta_t; m_t) , (\alpha_i; 1) \}. \]
By applying $ws_i$ to the first components, we get $w \beta_1, \dots, w\beta_t, - w \alpha_i$. Since $-w \alpha_i <0$ from the condition $\ell(ws_i) =\ell(w)+1$, we obtain
$m(\mathfrak p , ws_i) =m(\phi_i(\mathfrak p), w) +1-2 = m(\phi_i(\mathfrak p), w)- 1$.  Next assume that  
\[ \mathfrak p = \{ (s_i\beta_1; m_1), \dots, (s_i\beta_{j-1}; m_{j-1}), (s_i\beta_{j+1}; m_{j+1}), \dots , (s_i\beta_{t}; m_{t})  \} .\]
In this case, we have $\beta_j = \alpha_i$, and obtain $m(\mathfrak p , ws_i) =m(\phi_i(\mathfrak p), w)- 1$. 
\end{proof}

\begin{proposition} If $\lambda\in Q^-$ and $w \in W$, then
\begin{equation}\label{eq.dlambdaformula}
(-1)^{\ell(w)} H(-w \circ \lambda) =  q^{\ell(w)} \ \sum_{\mathfrak p \in \mathcal P(-\lambda)} (-q)^{m(\mathfrak p,w)} .
\end{equation}
\end{proposition}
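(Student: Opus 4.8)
The plan is to prove \eqref{eq.dlambdaformula} by induction on $\ell(w)$, using the bijections $\phi_i\colon\mathcal P(-\lambda)\to\mathcal P(-s_i\circ\lambda)$ together with the additivity relation of Lemma~\ref{lem-bij-mphi}. First I would handle the base case $w=1$: here $\ell(w)=0$ and $m(\mathfrak p,1)=|\mathfrak p|$ for every $\mathfrak p\in\mathcal P(-\lambda)$ since $w\beta=\beta>0$ for all parts, so the right-hand side of \eqref{eq.dlambdaformula} becomes $\sum_{\mathfrak p\in\mathcal P(-\lambda)}(-q)^{|\mathfrak p|}$, which equals $H(-\lambda)$ by \eqref{eqn-H-formula}. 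This matches the left-hand side since $(-1)^{\ell(1)}H(-1\circ\lambda)=H(-\lambda)$.

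For the inductive step, suppose the formula holds for all $\lambda'\in Q^-$ with the given $w'$ of length $\ell(w')=\ell(w)-1$, and write $w=w's_i$ with $\ell(w)=\ell(w')+1$. Applying the induction hypothesis to $w'$ and to the element $s_i\circ\lambda\in Q^-$ gives
\[
(-1)^{\ell(w')}H(-w'\circ(s_i\circ\lambda)) = q^{\ell(w')}\sum_{\mathfrak q\in\mathcal P(-s_i\circ\lambda)}(-q)^{m(\mathfrak q,w')}.
\]
Since $w'\circ(s_i\circ\lambda)=(w's_i)\circ\lambda=w\circ\lambda$ and $(-1)^{\ell(w')}=-(-1)^{\ell(w)}$, the left side is $-(-1)^{\ell(w)}H(-w\circ\lambda)$. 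On the right side, reindex the sum over $\mathcal P(-s_i\circ\lambda)$ via the bijection $\phi_i\colon\mathcal P(-\lambda)\to\mathcal P(-s_i\circ\lambda)$, writing $\mathfrak q=\phi_i(\mathfrak p)$. Lemma~\ref{lem-bij-mphi}, with the roles of $w$ and $w'$ as stated there (so that $\ell(w's_i)=\ell(w')+1$ is exactly our hypothesis $\ell(w)=\ell(w')+1$), gives $m(\phi_i(\mathfrak p),w')=m(\mathfrak p,w's_i)+1=m(\mathfrak p,w)+1$. Hence the right side becomes $q^{\ell(w')}\sum_{\mathfrak p\in\mathcal P(-\lambda)}(-q)^{m(\mathfrak p,w)+1}=-q^{\ell(w')+1}\sum_{\mathfrak p\in\mathcal P(-\lambda)}(-q)^{m(\mathfrak p,w)}=-q^{\ell(w)}\sum_{\mathfrak p\in\mathcal P(-\lambda)}(-q)^{m(\mathfrak p,w)}$. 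Cancelling the common factor $-1$ from both sides yields \eqref{eq.dlambdaformula}.

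The only genuine subtlety is bookkeeping: one must be careful that Lemma~\ref{lem-bij-mphi} is being applied with the correct element playing the role of "$w$" there, and that the bijection $\phi_i$ is indexed with $\lambda$ rather than $s_i\circ\lambda$ in the direction needed. The statement of Lemma~\ref{lem-bij-mphi} reads $m(\phi_i(\mathfrak p),w)=m(\mathfrak p,ws_i)+1$ under $\ell(ws_i)=\ell(w)+1$, so in the induction I should take "its $w$" to be my $w'$ and "its $ws_i$" to be my $w=w's_i$; the condition $\ell(w's_i)=\ell(w')+1$ is then precisely the hypothesis that I have chosen a length-increasing factorization of $w$. Everything else—the matching of $w'\circ(s_i\circ\lambda)$ with $w\circ\lambda$, the sign tracking of $(-1)^{\ell(\cdot)}$, and the power-of-$q$ accounting—is routine. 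I do not anticipate any obstacle requiring a new idea; the proof is a direct unwinding of the two preceding lemmas.
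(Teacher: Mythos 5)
Your proof is correct and is essentially the paper's own argument: the paper writes the same induction out as an explicit telescoping chain, peeling one simple reflection off a reduced word $w=s_{i_1}\cdots s_{i_\ell}$ at a time and applying the bijection $\phi_i$ together with Lemma~\ref{lem-bij-mphi} exactly as you do. Your bookkeeping of which element plays the role of ``$w$'' in that lemma, and the base case via \eqref{eqn-H-formula}, both match the paper.
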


\begin{proof}
Write $w=s_{i_1} s_{i_2}\cdots s_{i_\ell}$ as a reduced expression. 
By Lemma \ref{lem-bij-mphi}, we have
\begin{align*}
& H(-s_{i_1}s_{i_2} \cdots s_{i_\ell} \circ \lambda)  =  \sum_{\mathfrak p \in \mathcal P(-s_{i_1} s_{i_2}\cdots s_{i_\ell} \circ \lambda)} (-q)^{m(\mathfrak p, \mathrm{id})} \\& =  \sum_{\mathfrak p \in \mathcal P(-s_{i_2} \cdots s_{i_\ell} \circ \lambda)} (-q)^{m(\phi_{s_{i_1}}(\mathfrak p), \mathrm{id})} =  \sum_{\mathfrak p \in \mathcal P(-s_{i_2} \cdots s_{i_\ell} \circ \lambda)} (-q)^{m(\mathfrak p, s_{i_1})+1} \\ & =  \sum_{\mathfrak p \in \mathcal P(-s_{i_3} \cdots s_{i_\ell} \circ \lambda)} (-q)^{m(\phi_{s_{i_2}}(\mathfrak p), s_{i_1})} =  \sum_{\mathfrak p \in \mathcal P(-s_{i_3} \cdots s_{i_\ell} \circ \lambda)} (-q)^{m(\mathfrak p, s_{i_1}s_{i_2})+2} \\ &= \cdots = \sum_{\mathfrak p \in \mathcal P(- \lambda)} (-q)^{m(\mathfrak p, s_{i_1}\cdots s_{i_\ell})+\ell},
\end{align*}
which amounts to the identity \eqref{eq.dlambdaformula}.
\end{proof}

\section{Polynomiality}\label{section.poly}

In this section we prove Theorem \ref{thm.polynomial}. That is, we show that $d_\lambda$ is a polynomial when the Weyl group $W$ of $\mathfrak g$ is a universal Coxeter group. 

\medskip

Assume that $W$ be a universal Coxeter group of rank $n\in\mathbb{Z}_{>0}$. By definition, the group $W$ is isomorphic to the free product of $n$-copies of $\mathbb Z /2 \mathbb Z$. Denote its generators by $s_i$, $i=1,\dots,n$. The identity element is the only word of length $0$, and for any $\ell\geq1$ there are $n(n-1)^{\ell-1}$ words of length $\ell$. We thus compute the Poincar{\'e} series of $W$ to be:
\[
\chi(q)=\sum_{w\in W}q^{\ell(w)}=1+nq\sum_{k=0}^{\infty}\left((n-1)q\right)^k=1+\frac{nq}{1-(n-1)q}=\frac{1+q}{1-(n-1)q}\in\mathbb{Z}[[q]].
\]
Given $\lambda\in{ P^+\cap} Q^-_{\rm im}$, our aim is to establish the polynomiality of 
\begin{equation} \label{dla}
d_{\lambda}=\frac{\sum_{w\in W}(-1)^{\ell(w)}H(w\circ\lambda)}{\chi(q)}=\frac{\left(1-(n-1)q\right)\sum_{w\in W}(-1)^{\ell(w)}H(w\circ\lambda)}{1+q}\in\mathbb{Z}[[q]].
\end{equation}
Fix $\lambda\in{ P^+\cap} Q^-_{\rm im}$, and define
\begin{equation} \label{defN}
N= \max \{ \ell(w)+1 : (w\alpha_{i}; j) \text{ is a part of any } \p \in \mathcal P(-\lambda) \text{ for } 1\leq i\leq n, \ j \in \mathbb Z \text{ and } w \in W \}.
\end{equation}
Since $W$ is a universal Coxeter group, the number $N$ is well-defined.
For the time being, fix an arbitrary element $v \in W$ of length $N$. Let  $W_v$ be the set of elements in $W$ whose reduced word has $v$ as its rightmost factor. For $\mathfrak{p}\in\mathcal{P}(-\lambda)$, define
\begin{equation}
m(\p, W_v) = |\p|-2  \times \#\left\{ (\beta_i; j) \in \mathfrak p \, : \,  w\beta_i<0 \text{  for some $w \in W_v$}\right\}. 
\end{equation}
Write
\begin{equation}\label{ak}
\sum_{\p \in \mathcal P(-\lambda)} (-q)^{m(\p,W_v)} = \sum_{k=0}^r a_k q^k 
\end{equation}
for some $a_k \in \mathbb Z$ and $r \ge 0$,
and define
\begin{align} Q_{v}&:=\sum_{k=0}^{r-1} \left( (n-1)^k a_0+(n-1)^{k-1}a_1+(n-1)^{k-2}a_2+\cdots + a_k \right ) q^k,  \label{eqQ} \\
A_v&:=(n-1)^r a_0+(n-1)^{r-1}a_1+(n-1)^{r-2}a_2+\cdots + a_r. \label{eqA} 
\end{align}
It follows from \eqref{eq.dlambdaformula} that
\begin{align*}
\sum_{w \in W_v} (-1)^{\ell(w)} H(-w \circ \lambda) &= \sum_{w \in W_v} q^{\ell(w)} \ \sum_{\p \in \mathcal P(-\lambda)} (-q)^{m(\p,w)} \\&
=  q^N \left ( Q_v  +  A_v \, q^r\left ( 1 + (n-1) q + (n-1)^2 q^2 + \cdots \right ) \right)\\ &= q^N \left ( Q_v  + A_v \, \frac {q^r} {1 -(n-1)q }  \right) ,
\end{align*}
and so
\begin{multline}\label{analyticcont}
\sum_{w \in W} (-1)^{\ell(w)} H(-w \circ \lambda) \\  
= \sum_{w \in W, \ell(w) <N} q^{\ell(w)} \sum_{\p \in \mathcal P(-\lambda)} (-q)^{m(\p,w)} + q^N  \sum_{v \in W, \ell(v) =N} \left ( Q_v  + A_v \frac {q^r} {1 -(n-1)q }  \right) .
\end{multline}

\begin{proposition} \label{prop-vanishing}
For any $w \in W$, the sum  
\[ \sum_{w \in W} (-1)^{\ell(w)} H(-w \circ \lambda)   \]
is divisible by $1+q$.

\end{proposition}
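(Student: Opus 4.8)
The plan is to show that the power series $F(q):=\sum_{w\in W}(-1)^{\ell(w)}H(-w\circ\lambda;q)$ is actually a polynomial and that $F(-1)=0$; since $F(q)\in\mathbb{Z}[[q]]$, vanishing at $q=-1$ together with polynomiality gives divisibility by $1+q$. The key input is equation \eqref{analyticcont}, which already exhibits $F(q)$ as a finite sum of honest polynomials (the terms with $\ell(w)<N$, plus the $Q_v$ pieces) together with the rational contributions $q^N\sum_{\ell(v)=N}A_v\,q^r/(1-(n-1)q)$. The first thing I would do is observe that $1-(n-1)q$ does not vanish at $q=-1$ (its value there is $1+(n-1)=n\neq 0$ since $n\geq 1$), so each rational summand is regular at $q=-1$ and it makes sense to evaluate $F(-1)$ termwise.

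Next I would compute $F(-1)$ using the specialization $H(\mu;1)$ from \eqref{eqn-kl}: by Lemma \ref{lem.r2coeffsum0}, for nonzero $\lambda\in P^+\cap Q^-$ every $H(-w\circ\lambda;1)=0$, hence $F(1)=0$; but I actually need $q=-1$, not $q=1$. So instead I would go back to the refined identity \eqref{eq.dlambdaformula}, which says $(-1)^{\ell(w)}H(-w\circ\lambda;q)=q^{\ell(w)}\sum_{\mathfrak p\in\mathcal P(-\lambda)}(-q)^{m(\mathfrak p,w)}$. Setting $q=-1$, each term $q^{\ell(w)}(-q)^{m(\mathfrak p,w)}=(-1)^{\ell(w)}(1)^{m(\mathfrak p,w)}=(-1)^{\ell(w)}$, so $(-1)^{\ell(w)}H(-w\circ\lambda;-1)=(-1)^{\ell(w)}|\mathcal P(-\lambda)|$, i.e. $H(-w\circ\lambda;-1)=|\mathcal P(-\lambda)|$, independent of $w$. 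Therefore $F(-1)=\sum_{w\in W}(-1)^{\ell(w)}|\mathcal P(-\lambda)|=|\mathcal P(-\lambda)|\sum_{w\in W}(-1)^{\ell(w)}$. For a universal Coxeter group of rank $n$, $\sum_{w\in W}(-1)^{\ell(w)}=1+\sum_{\ell\geq 1}n(n-1)^{\ell-1}(-1)^{\ell}=1-n\sum_{k\geq 0}(n-1)^k(-1)^k=1-\frac{n}{1+(n-1)}=1-1=0$ (this is just $\chi(-1)$, which one reads off from $\chi(q)=(1+q)/(1-(n-1)q)$). Hence $F(-1)=0$.

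The remaining point is to justify that $F(q)$, viewed as a genuine element of $\mathbb{Z}[[q]]$, is divisible by $1+q$ in $\mathbb{Z}[[q]]$ — equivalently, that $F(q)/(1+q)\in\mathbb{Z}[[q]]$. This is where I would use \eqref{analyticcont}: $F(q)=P(q)+\frac{R(q)}{1-(n-1)q}$ where $P,R\in\mathbb{Z}[q]$ are explicit polynomials (with $R(q)=q^{N+r}\sum_{\ell(v)=N}A_v$). Thus $F(q)(1-(n-1)q)=P(q)(1-(n-1)q)+R(q)$ is a polynomial in $\mathbb{Z}[q]$; call it $G(q)$. Since $F(-1)=0$ and $1-(n-1)q$ is a unit in $\mathbb{Z}[[q]]$ (constant term $1$), we get $G(-1)=F(-1)\cdot n=0$, so $(1+q)\mid G(q)$ in $\mathbb{Z}[q]$, say $G(q)=(1+q)G_1(q)$ with $G_1\in\mathbb{Z}[q]$. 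Then $F(q)=(1+q)\cdot\frac{G_1(q)}{1-(n-1)q}$, and $\frac{G_1(q)}{1-(n-1)q}\in\mathbb{Z}[[q]]$ because $1-(n-1)q$ is a unit in $\mathbb{Z}[[q]]$. Hence $(1+q)\mid F(q)$ in $\mathbb{Z}[[q]]$, which is the assertion.

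The only genuine obstacle is the bookkeeping in $\eqref{analyticcont}$ — making sure the decomposition of $W$ into cosets $W_v$ over $\ell(v)=N$ is exhaustive and disjoint, and that the geometric-series manipulation producing $Q_v$ and $A_v$ is valid; but all of this is already carried out in the text preceding the proposition, so it can be invoked directly. I should also note the degenerate case $n=1$ (where $W\cong\mathbb{Z}/2\mathbb{Z}$ is finite and $Q^-_{\rm im}=\{0\}$, so there is nothing to prove) is harmless, and that the nonzero hypothesis on $\lambda$ is only needed for the alternative $q=1$ route and not for the argument above, which works for all $\lambda\in P^+\cap Q^-_{\rm im}$ including $\lambda=0$ where $|\mathcal P(0)|=1$ and $F(-1)=\chi(-1)=0$ still.
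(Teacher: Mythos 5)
Your argument is essentially the paper's: both evaluate the rational‑function form \eqref{analyticcont} of the sum at $q=-1$, using the fact (via \eqref{eq.dlambdaformula}) that each summand specializes there to $(-1)^{\ell(w)}|\mathcal{P}(-\lambda)|$, and conclude from $\chi(-1)=0$. You are actually more explicit than the paper on the final step --- why vanishing of the rational function at $q=-1$, together with $1-(n-1)q$ being a unit of $\mathbb{Z}[[q]]$ and $1+q$ being monic, yields divisibility by $1+q$ in $\mathbb{Z}[[q]]$ --- and that is a worthwhile addition. The one place where you are looser than the paper is the line $F(-1)=|\mathcal{P}(-\lambda)|\sum_{w\in W}(-1)^{\ell(w)}$: the right-hand side is a divergent series, so this equality only has content after grouping the tail into the cosets $W_v$ and verifying that each rational contribution $q^N\bigl(Q_v+A_v\,q^r/(1-(n-1)q)\bigr)$ evaluates at $q=-1$ to $(-1)^N|\mathcal{P}(-\lambda)|/n$, i.e.\ to the Abel-regularized value of $\sum_{w\in W_v}(-1)^{\ell(w)}|\mathcal{P}(-\lambda)|$. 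That verification --- the computation showing $Q_v+A_v(-1)^r/n=\tfrac{1}{n}\sum_k(-1)^k a_k=\tfrac{1}{n}|\mathcal{P}(-\lambda)|$ from the definitions \eqref{eqQ} and \eqref{eqA} --- is precisely what the paper's proof carries out and is the only piece missing from your write-up; everything else, including the role of the nonvanishing of $1-(n-1)q$ at $q=-1$, is in place.
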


\begin{proof}
Let $N$ be defined as in \eqref{defN}, and $v \in W$ be an arbitrary element of length $N$. For any $w\in W$, we have
\[\sum_{\p\in\mathcal{P}(-\lambda)}1^{m(\p,w)}=|\mathcal{P}(-\lambda)|.\]
Therefore
\[\sum_{\substack{w\in W\\\ell(w)<N}}(-1)^{\ell(w)}\sum_{\p\in\mathcal{P}(-\lambda)}1^{m(\p,w)}=|\mathcal{P}(-\lambda)| \sum_{\substack{w\in W\\\ell(w)<N}}(-1)^{\ell(w)}.\]
Since
\[\sum_{\substack{w\in W\\\ell(w)<N}}(-1)^{\ell(w)}=\sum_{k=0}^{N-1}(-1)^k\#\{w\in W:\ell(w)=k\},\]
and
\[\#\{w\in W:\ell(w)=k\}=\begin{cases}1,&k=0\\ n(n-1)^{k-1}, & k>0, \end{cases}\]
we deduce
\begin{align*}
\sum_{\substack{w\in W\\\ell(w)<N}}(-1)^{\ell(w)} &=1-n+n(n-1)-n(n-1)^2+\cdots+(-1)^{N-1}n(n-1)^{N-2}\\
&=1-n\left[1+(-1)(n-1)+(-1)^2(n-1)^2+\cdots+(-1)^{N-2}(n-1)^{N-2}\right]\\
&=1-n\left[1+(1-n)+(1-n)^2+\cdots+(1-n)^{N-2}\right]\\
&=1-n\left(\frac{1-(1-n)^{N-1}}{1-(1-n)}\right)=(1-n)^{N-1}.
\end{align*}
Combining the above, we see that
\[\sum_{\substack{w\in W\\\ell(w)<N}}(-1)^{\ell(w)}\sum_{\p\in\mathcal{P}(-\lambda)}1^{m(\p,w)}=(1-n)^{N-1} |\mathcal{P}(-\lambda)|.\]
Let $A_v$ and $Q_v$ be defined as in \eqref{eqA} and \eqref{eqQ}, respectively.
Then  we have
\[\frac{(-1)^r}{n}A_v=\frac{(-1)^r}{n}\left((n-1)^ra_0+(n-1)^{r-1}a_1+\cdots+a_r\right),\]
and 
\begin{multline*}
Q_v=a_0\left(1-(n-1)+(n-1)^2+\cdots+(-1)^{r-1}(n-1)^{r-1}\right)\\
-a_1\left(1-(n-1)+(n-1)^2+\cdots+(-1)^{r-2}(n-1)^{r-2}\right)
+\cdots+(-1)^{r-1}a_{r-1}\\
=\tfrac{1-(1-n)^r}{n}\, a_0-\tfrac{1-(1-n)^{r-1}}{n}\, a_1+\cdots+(-1)^{r-2} \, \tfrac{1-(1-n)^2}{n} \, a_{r-2}+(-1)^{r-1}a_{r-1},
\end{multline*}
so that
\[Q_v+A_v\frac{(-1)^r}{n}=\frac{1}{n}\left(a_0-a_1+a_2-a_3+\cdots+(-1)^ra_r\right)=\frac{1}{n}|\mathcal{P}(-\lambda)|.\]
Evaluating~\eqref{analyticcont} at $q=-1$, we get
\[(1-n)^{N-1} |\mathcal{P}(-\lambda)|+(-1)^Nn(n-1)^{N-1}\frac{1}{n} | \mathcal{P}(-\lambda)| =0.\]
\end{proof}

\begin{proof}[Proof of Theorem~\ref{thm.polynomial}] It follows from \eqref{analyticcont} that 
\[ ({1 -(n-1)q } ) \sum_{w \in W} (-1)^{\ell(w)} H(-w \circ \lambda) \] is a polynomial. By Proposition \ref{prop-vanishing}, the sum $\sum_{w \in W} (-1)^{\ell(w)} H(-w \circ \lambda)$ is divisible by $1+q$. Thus we see from \eqref{dla} that $d_\lambda$ is a polynomial. 
\end{proof}

\begin{remark}
From \cite[(3-21)]{Kim--Lee}, we know that 
\begin{equation}\label{eq.KL321}
H(-w\circ \lambda;-1)= H(\rho - w(\lambda+\rho); -1) = \dim V(\rho)_{w(\lambda+\rho)} = \dim V(\rho)_{\lambda+\rho} .
\end{equation}
Taking the alternating sum,  we get
\[ \sum_{w \in W} (-1)^{\ell(w)} H(-w\circ \lambda;-1) = \dim V(\rho)_{\lambda+\rho} \sum_{w \in W} (-1)^{\ell(w)}, \]
which does not converge. In Proposition~\ref{prop-vanishing}, the sum $\sum_{w \in W} (-1)^{\ell(w)}H(-w\circ\lambda;q)$ is to be interpreted via its analytic continuation given by the rational function in equation~\eqref{analyticcont}.

\end{remark}

\appendix
\section{~}\label{section.rank2}
In this appendix, we consider the explicit example of the Kac--Moody algebra $\mathfrak{g}=\mathcal{H}(3)$ associated to the generalized Cartan matrix 
\begin{equation}\label{eq.Rank2Cartan}
A=\begin{pmatrix}2&-3 \\ -3 &2 \end{pmatrix}.
\end{equation} 
The Weyl group $W$ is the universal Coxeter group of rank $2$, that is,  $W$ is isomorphic to the free product $\left(\mathbb{Z}/2\mathbb{Z}\right)\ast\left(\mathbb{Z}/2\mathbb{Z}\right)$. As there are two elements for a given length $\ge 1$, the Poincar\'{e} series has the following closed form:
\[
\chi(q)=1+2q\sum_{\ell=0}^{\infty}q^{\ell}=1+2q\left(\frac{1}{1-q}\right)=\frac{1+q}{1-q}\in\mathbb{Z}[[q]].
\]
We denote the simple roots by $\alpha_1,\alpha_2$ and the simple reflections by $s_1,s_2$ as before.
When $\mathfrak p=\{ (\beta_1; m_1), (\beta_2; m_2), \dots , (\beta_t; m_t)  \}$ is an admissible partition, we will sometimes write
\[ \mathfrak p = (\beta_1; m_1)+ (\beta_2; m_2)+ \dots + (\beta_t; m_t) .\]
For the root multiplicities of $\mathcal H(3)$, we refer the reader to \cite{KaMe}. 
\begin{ex}\label{ex.212} 
Consider $\lambda=-2\alpha_1-2\alpha_2\in Q^-$. Then there are 4 admissible partitions of $-\lambda$:
\begin{enumerate}
\item $(2\alpha_1+2\alpha_2;1)$,
\item $(\alpha_1;1)+(\alpha_1+2\alpha_2;1)$,
\item $(\alpha_2;1)+(2\alpha_1+\alpha_2;1)$,
\item $ (\alpha_1;1)+(\alpha_2;1)+(\alpha_1+\alpha_2;1)$.
\end{enumerate}
All the roots appearing in the list above have multiplicity $1$, and so
\[
H(-\lambda) =H(2\alpha_1+2\alpha_2)=-q + 2 q^2 -q^3 = -q(q-1)^2.
\] 
We calculate
\[
- s_1\circ\lambda= - s_1 (\lambda+ \rho) + \rho=5\alpha_1+2\alpha_2,
\]
and see that $-s_1\circ\lambda$ has 4 admissible partitions:
\begin{enumerate}
\item $(5\alpha_1+2\alpha_2;1)$,
\item $(\alpha_1;1)+(4\alpha_1+2\alpha_2;1)$,
\item $(2\alpha_1+\alpha_2;1)+(3\alpha_1+\alpha_2;1)$,
\item $ (\alpha_1;1)+(\alpha_1+\alpha_2;1)+(3\alpha_1+\alpha_2;1)$.
\end{enumerate}
Again, all the roots appearing have multiplicity $1$. We therefore deduce that
\[
H(-s_1\circ\lambda)=-q(q-1)^2=H(-\lambda).
\]
Similarly, we compute
\[
H(-s_2\circ\lambda)=-q(q-1)^2=H(-\lambda).
\]
The circle action of $s_1s_2$ on $\lambda$ yields
\[
-s_1s_2\circ\lambda=14\alpha_1+5\alpha_2,
\]
which is not a root. Yet again we have 4 admissible partitions, but the lengths are different:
\begin{enumerate}
\item $(\alpha_1;1)+(13\alpha_1+5\alpha_2;1)$, 
\item $(\alpha_1;1)+(3\alpha_1+\alpha_2;1)+(10\alpha_1+4\alpha_2;1)$, 
\item $(\alpha_1;1)+(5\alpha_1+2\alpha_2;1)+(8\alpha_1+3\alpha_2;1)$, 
\item $(\alpha_1;1)+(2\alpha_1+\alpha_2;1)+(3\alpha_1+\alpha_2;1)+(8\alpha_1+3\alpha_2;1)$,
\end{enumerate}
in which  all the roots still have multiplicity $1$. It follows that
\[
H(-s_1s_2\circ\lambda)=q^2(q-1)^2=-qH(-s_1\circ\lambda)=-qH(-\lambda).
\]
One can see that this pattern continues as proved in the previous section  to yield
\begin{equation} \label{eqn-hwcirc}  
H(-w \circ \lambda) = (-q)^{\ell(w)-1} H(-\lambda), \quad w \in W, \ w \neq \mathrm{id}. 
 \end{equation}
It follows from equations~\eqref{eq.formulad} and~\eqref{eqn-hwcirc} that
\begin{align*} 
\sum_{w \in W} (-1)^{\ell(w)} H(-w \circ \lambda)  &= H(-\lambda) +\sum_{w \in W, \ w \neq \mathrm {id}} (-1)^{\ell(w)} (-q)^{\ell(w)-1} H(- \lambda)\\
&= (1+q^{-1}) H(-\lambda) - q^{-1} \chi(q) H(-\lambda) ,
\end{align*}
and so 
\[ 
d_{-2\alpha_1 -2 \alpha_2} = \left [ (1+q^{-1}) \, \frac {1-q}{1+q} -q^{-1} \right ] H(-\lambda) = - H(-\lambda)  =q(q-1)^2. 
\] 
\end{ex}

\begin{ex}
Let $\lambda=-2\alpha_1-3\alpha_2 \in Q^-$, which is a root with multiplicity 2. We have admissible partitions:
\begin{enumerate}
\item $(2\alpha_1+3\alpha_2,n)$, $n\in\{1,2\}$,
\item $(2\alpha_1+2\alpha_2,1)+(\alpha_2,1)$,
\item $(\alpha_1+3\alpha_2,1)+(\alpha_1,1)$,
\item $(\alpha_1+2\alpha_2,1)+(\alpha_1+\alpha_2,1)$,
\item $(\alpha_1+2\alpha_2,1)+(\alpha_1,1)+(\alpha_2,1)$.
\end{enumerate}
Therefore
\[H(-\lambda)=-2q+3q^2-q^3=-q(q-1)(q-2).\]
We have
\[-s_1\circ\lambda=8\alpha_1+3\alpha_2,\]
which is a root with multiplicity 1, and admissible partitions:
\begin{enumerate}
\item $(8\alpha_1+3\alpha_2,1)$,
\item $(7\alpha_1+3\alpha_2,n)+(\alpha_1,1)$, $n\in\{1,2\}$,
\item $(5\alpha_1+2\alpha_2,1)+(3\alpha_1+\alpha_2,1)$,
\item $(5\alpha_1+2\alpha_2,1)+(2\alpha_1+\alpha_2,1)+(\alpha_1,1)$,
\item $(4\alpha_1+2\alpha_2,1)+(3\alpha_1+\alpha_2,1)+(\alpha_1,1)$.
\end{enumerate}
Therefore
\[H(-s_1\circ\lambda)=-q+3q^2-2q^3=-q(q-1)(2q-1).\]
On the other hand
\[-s_2\circ\lambda=2\alpha_1+4\alpha_2,\]
which is a root of multiplicity 1, and admissible partitions:
\begin{enumerate}
\item $(2\alpha_1+4\alpha_2,1)$, 
\item $(2\alpha_1+3\alpha_2,n)+(\alpha_2,1)$, $n\in\{1,2\}$,
\item $(\alpha_1+3\alpha_2,1)+(\alpha_1+\alpha_2,1)$,
\item $(\alpha_1+3\alpha_2,1)+(\alpha_1,1)+(\alpha_2,1)$,
\item $(\alpha_1+2\alpha_2,1)+(\alpha_1+\alpha_2,1)+(\alpha_2,1)$.
\end{enumerate}
Therefore:
\[H(-s_2\circ\lambda)=-q+3q^2-2q^3.\]
Now 
\[-s_2s_1\circ\lambda=8\alpha_1+22\alpha_2,\]
which is not a root. We make a list of all admissible partitions:
\begin{enumerate}
\item $(8\alpha_1+21\alpha_2,1)+(\alpha_2,1)$,
\item $(7\alpha_1+18\alpha_2,n_1)+(\alpha_1+3\alpha_2,1)+(\alpha_2,1)$, $n_1\in\{1,2\}$,
\item $(5\alpha_1+13\alpha_2,1)+(3\alpha_1+8\alpha_2,1)+(\alpha_2,1)$, 
\item $(5\alpha_1+13\alpha_2,1)+(2\alpha_1+5\alpha_2,1)+(\alpha_1+3\alpha_2,1)+(\alpha_2,1)$,
\item $(4\alpha_1+10\alpha_2,1)+(3\alpha_1+8\alpha_2,1)+(\alpha_1+3\alpha_2,1)+(\alpha_2,1)$.
\end{enumerate}
It follows that
\[H(-s_2s_1\circ\lambda)=q^2-3q^2+2q^4=q^2(2q^2-3q+2).\]
On the other hand,
\[ - s_1s_2\circ\lambda=11\alpha_1+4\alpha_2,\]
which is not a root, and its admissible  partitions are:
\begin{enumerate}
\item $(10\alpha_1+4\alpha_2,1)+(\alpha_1,1)$,
\item $(8\alpha_1+3\alpha_2,1)+(3\alpha_1+\alpha_2,1)$,
\item $(8\alpha_1+3\alpha_2,1)+(2\alpha_1+\alpha_2,1)+(\alpha_1,1)$,
\item $(7\alpha_1+3\alpha_2,n)+(3\alpha_1+\alpha_2)+(\alpha_1,1)$, $n\in\{1,2\}$,
\item $(5\alpha_1+2\alpha_2,1)+(3\alpha_1+2\alpha_2,1)+(2\alpha_1+\alpha_2,1)+(\alpha_1,1)$.
\end{enumerate}
Therefore:
\[H(-s_1s_2\circ\lambda)=2q^2-3q^3+q^4=q^2(q-1)(q-2).\]
Next
\[-s_1s_2s_1\circ\lambda=59\alpha_1+22\alpha_2,\]
which is not a root. We have 
\[
H(-s_1s_2s_1\circ\lambda)=-q^3+3q^4-2q^5.
\]
Also
\[-s_2s_1s_2\circ\lambda=11\alpha_1+30\alpha_2,\]
and
\[
H(-s_2s_1s_2\circ\lambda)=-2q^3+3q^4-q^5.
\]
We arrange the information above into a table, in which the columns are indexed by $n\in\mathbb{N}$ and the rows are indexed by $w\in W$ (written as reduced words, ordered lexicographically). The entry corresponding to row $w$ and column $n$ is the coefficient of $q^n$ in $H(-w\circ\lambda)$. An empty space indicates that the coefficient is zero. There is one additional column, which lists the image $w\circ\lambda$ of $\lambda$ under the circle action by $w$, written in coordinates with respect to the basis $\{-\alpha_1,-\alpha_2\}$. 

\begin{center}
\begin{tabular}{|c||c||c|c|c|c|c|c|}
\hline
$w$ & $w \circ \lambda$ &  1 & 2 & 3 & 4 & 5 & $\cdots$ \\
\hline \hline
$\mathrm{id}$&  $(2,3)$ &$-2$ & $3$ & $-1$ & & & \\
$s_1$&$(8,3)$ & $-1$ & $3$ & $-2$ &&&\\ 
$s_2$&$(2,4)$ & $-1$ & $3$ & $-2$ &&& \\
$s_2s_1$&$(8,22)$ && $1$ & $-3$ & $2$ && \\
$s_1s_2$&$(11,4)$  && $2$ & $-3$ & $1$ && \\
$s_1s_2s_1$& $(59,22)$ &&& $-1$ & $3$ & $-2$ & \\
$s_2s_1s_2$& $(11,30)$ &&& $-2$ & $3$ & $-1$ & \\
$\vdots$& $\vdots$ & & & & $\vdots$ & $\vdots$ & $\ddots$ \\
\hline
\end{tabular}
\end{center}
Observe that the strings $(-2,3,-1)$ and $(-1,3,-2)$ repeat with each iteration, shifting 1 space and switching signs as the word length increases.  The coefficient of $q^n$ in $\chi(q) \, d_{\lambda}$ can be calculated by taking the sum of the entries in a column multiplied by $(-1)^{\ell(w)}$. We see that
\[
d_{-2\alpha_1-3\alpha_2}=0.
\]
\end{ex}

\begin{ex}\label{ex.312}
Consider $\lambda=-3\alpha_1-3\alpha_2\in Q^-$. There are 12 admissible partitions of $-\lambda$:
\begin{enumerate}
\item $(3 \alpha_1+3 \alpha_2; 1)$, 
\item $(3 \alpha_1+3 \alpha_2; 2)$, 
\item $(3 \alpha_1+3 \alpha_2; 3)$,  
\item $(3 \alpha_1+2 \alpha_2; 1) + (\alpha_2; 1)$, 
\item $(3 \alpha_1+2 \alpha_2; 2) + (\alpha_2;1)$, 
\item $(2 \alpha_1+3 \alpha_2; 1) + (\alpha_1; 1)$, 
\item $(2 \alpha_1+3 \alpha_2; 2) + (\alpha_1;1)$, 
\item $(2 \alpha_1+2 \alpha_2; 1) + (\alpha_1+\alpha_2; 1)$, 
\item $(2 \alpha_1+ \alpha_2; 1) + (\alpha_1+2\alpha_2;1)$, 
\item $(\alpha_1; 1)+ ( \alpha_1+\alpha_2; 1) + (\alpha_1+2\alpha_2; 1)$, 
\item $ (\alpha_2; 1)+ ( \alpha_1+\alpha_2; 1) + (2\alpha_1+\alpha_2; 1)$, 
\item $(\alpha_1; 1)+ ( \alpha_2; 1) + (2\alpha_1+2\alpha_2; 1)$.
\end{enumerate}
Note that $m(3\alpha_1+3\alpha_2)=3$, $m(2\alpha_1+3\alpha_2)=m(3\alpha_1+2\alpha_2)=2$, and the other roots each have multiplicity 1. We conclude
\[
H(-\lambda) =H(3\alpha_1+3\alpha_2)=-3q + 6 q^2 -3q^3 = -3q(q-1)^2.
\] 
We continue to obtain the following table.

\begin{center}
\begin{tabular}{|c||c||c|c|c|c|c|c|c|c|}
\hline 
$w$&$w\circ\lambda$&1&2&3&4&5&6&$\cdots$\\
\hline \hline
id&(3,3)&$-3$&6&$-3$&&&&\\
$s_1$&(7,3)&$-2$&5&$-4$&1&&&\\
$s_2$&(3,7)&$-2$&5&$-4$&1&&&\\
$s_1s_2$&(19,7)&&2&$-5$&4&$-1$&&\\
$s_2s_1$&(7,19)&&2&$-5$&4&$-1$&&\\
$s_1s_2s_1$&(19,51)&&&$-2$&$5$&$-4$&$1$&\\
$s_2s_1s_2$&(51,19)&&&$-2$&$5$&$-4$&$1$&\\
$\vdots$&$\vdots$&&&&$\vdots$&$\vdots$&$\vdots$&$\ddots$\\
\hline
\end{tabular}
\end{center}
Observe that the string $(-2,5,-4,1)$ repeats with each iteration, shifting 1 space and switching signs as the word length increases. Since $-2+5-4+1=0$, the coefficient of $q^n$ is $0$ for $n\geq 4$. As it happens, the coefficient of $q^2$ is $0$ too. We conclude
\[d_{-3\alpha_1-3\alpha_2}=\frac{-q^3+q}{\chi(q)}=q(q-1)^2.\] 
\end{ex}
\begin{ex}\label{ex.3142}
Let $\lambda=-3\alpha_1-4\alpha_2\in Q^-$. We produce a table similar to that in Example~\ref{ex.312}
\begin{center}
\begin{tabular}{|c||c||c|c|c|c|c|c|c|c|}
\hline
$w$&$w\circ\lambda$&1&2&3&4&5&6&7&$\cdots$\\
\hline \hline
id&$(3,4)$&$-4$&8&$-5$&1&&&&\\
$s_1$&$(10,4)$&$-1$&7&$-8$&2&&&&\\
$s_2$&$(3,6)$&$-3$&$8$&$-6$&$1$&&&&\\
$s_2s_1$&(10,27)&&$1$&$-7$&$8$&$-2$&&&\\
$s_1s_2$&(16,6)&&4&$-9$&5&&&&\\
$s_1s_2s_1$&(72,27)&&&$-1$&7&$-8$&2&&\\
$s_2s_1s_2$&(16,43)&&&$-4$&9&$-5$&&&\\
$s_1s_2s_1s_2$&(72,190)&&&&1&$-7$&8&$-2$&\\
$s_2s_1s_2s_1$&(114,43)&&&&4&$-9$&5&&\\
$\vdots$&$\vdots$&&&&&$\vdots$&$\vdots$&$\vdots$&$\ddots$\\
\hline
\end{tabular}
\end{center}
This time, the strings $(1, -7, 8, -2)$ and $(4,-9,5)$ alternate.  
Note that both strings sum to zero. We see that  for $n\geq5$, the coefficient of $q^n$ in $d_{\lambda}$ is $0$. The coefficients of $q$ and $q^4$ are also  $0$.   Altogether we obtain
\[d_{-3\alpha_1-4\alpha_2}=\frac{-2q^2(1+q)}{\chi(q)}=2q^2(q-1).\]
\end{ex}

\begin{ex}
We may compute other  $d_\lambda$'s in a similar way. In the following table, the entry in the space $(m,n)$ is the polynomial $d_{\lambda}$ for $\lambda=-m\alpha_1-(m+n)\alpha_2 \in P^+$.  From symmetry in $\mathcal H(3)$, we have $d_{-m\alpha_1-(m+n)\alpha_2}= d_{-(m+n)\alpha_1-m \alpha_2}$.
\begin{center}
\begin{tabular}{|c||c|c|c|c|c|c|c|}
\hline 
 & 0 & 1 &2&3&4 \\
\hline \hline
0& 1 & $-q(q-1)$ & $q(q-1)^2$  & $q(q-1)^2$ & $2q(q-1)^2$  \\
\hline
1&  &  & $0$ & $2q^2(q-1)$ & $-q^2(q-1)(q-4)$  \\
\hline
2&  &  & &  & $-q (q - 1)^2 (q^2 + q - 1)$  \\
\hline
\end{tabular}
\end{center}
We also have
\[ d_{-5\alpha_1-5\alpha_2} = -q(q-1)(q^3+3q^2-7q+2) .\]

\end{ex}

\end{document}